\numberwithin{equation}{section}
\newcommand\C{\mathbb{C}}
\newcommand\N{\mathbb{N}}
\newcommand\sE{{\mathcal{E}}}
\newcommand\tE{{\mathbb{E}}}
\newcommand\HH{{\mathrm H}}
\newcommand\sP{{\mathbb{P}}}
\newcommand{\Q}{\mathbb{Q}}
\newcommand{\tH}{\mathbb{H}}
\newcommand{\Z}{\mathbb{Z}}
\newcommand{\R}{\mathbb{R}}
\newcommand\SL{{\mathrm {SL}}}
\newtheorem{thm}{Theorem}
\newtheorem{theorem}[thm]{Theorem}
\newtheorem{cor}[thm]{Corollary}
\newtheorem{prop}[thm]{Proposition}
\newtheorem{lemma}[thm]{Lemma}
\theoremstyle{definition}
\newtheorem{definition}[thm]{Definition}
\theoremstyle{remark}
\newtheorem{remark}[thm]{Remark}
\theoremstyle{definition}
\theoremstyle{remark}
\theoremstyle{remark}
\def\imod#1{\allowbreak\mkern10mu({\operator@font mod}\,\,#1)}
\begin{document}
\title{The Eisenstein and winding elements of modular symbols for odd square-free level}

\author{SriLakshmi Krishnamoorty}
\address{Indian Institute of Science Education and Research, Thiruvananthapuram, India}
\email{srilakshmi@iisertvm.ac.in}

\begin{abstract}
We explicitly write down the {\it Eisenstein elements} inside the space of modular symbols for  
Eisenstein series with integer coefficients for the congruence subgroups $\Gamma_0(N)$ with $N$ odd square-free. 
We also compute the { \it winding elements} explicitly 
for these congruence subgroups.  Our results are explicit versions of the Manin-Drinfeld Theorem [Thm.\,\ref{ManinDrinfeld}]. These results are the generalization of the paper \cite{MR3463038} results to odd square-free level.

\end{abstract}
\subjclass[2010]{Primary: 11F67, Secondary: 11F11, 11F20, 11F30}
\keywords{Eisenstein series, Modular symbols, Special values of $L$-functions}
\maketitle
\section{Introduction}
\label{Introduction}
Mazur gave a list of all 
possible torsion subgroups of elliptic curves [cf. Thm. 8, \cite{MR488287}].
 Merel 
wrote down explicit expression of the Eisenstein elements for the congruence subgroups $\Gamma_0(p)$ for any odd prime $p$ \cite{MR1405312}.
He used this to give an uniform upper bound of the torsion 
points of elliptic curves over any number fields in terms of extension degrees of these number fields
 \cite{MR1369424}. 
The explicit expressions of winding elements for $\Gamma_0(p)$ \cite{MR1405312} are used by Calegari and Emerton to 
study the ramifications of Hecke algebras at the Eisenstein primes \cite{MR2129709}. 
 Several authors investigated the arithmetic invariants of the elliptic curves 
over number fields using modular symbols.  

In this paper, we give an ``explicit version" of the proof 
of the Manin-Drinfeld theorem [Thm.\,\ref{ManinDrinfeld}] for the special case of the image in $\HH_1(X_0(N), \R)$ of the path in $\HH_1(X_0(N), \partial(X_0(N)), \Z)$ joining $0$  and $i\infty.$\\ 
For $m \mid N,$  consider the basis $\{ E_m \}$ of $E_2(\Gamma_0(N))$ [\S\,\ref{Eisensteinseries}] for which the constant term $a_0(E_m) \in \Z.$
\begin{definition}
[Eisenstein elements]
The intersection pairing $\circ$ \cite{MR1240651} induces a perfect, bilinear pairing 
\[
 \HH_1(X_0(N),\partial(X_0(N),\Z) \times \HH_1(Y_0(N),\Z) \rightarrow \Z. 
\]
Let $\pi_{E_m}:\HH_1(Y_0(N),\Z) \rightarrow \Z$
be the ``period'' homomorphism of $E_N$ [\S~\ref{Peiods}].
Since $\circ$ is a non-degenerate, 
there is an unique modular symbol $\sE_{E_m} \in \HH_1(X_0(N),\partial(X_0(N)),\Z)$ such that $\sE_{E_m} \circ c=\pi_{E_m}(c).$ 
This unique element $\sE_{E_m}$ is the {\it Eisenstein element} corresponding to the Eisenstein series 
$E_m$.
\end{definition}
\begin{definition} \label{winding element}
(Winding elements)
Let $\{0,\infty\}$ denote the projection 
of the path from $0$ to $\infty$ in $\tH \cup {\sP}^1(\Q)$ to $X_0(N)(\C)$. 
We have an isomorphism $\mathrm{H}_1(X_0(N),\Z)\otimes \R \simeq \mathrm{Hom}_{\C}(\mathrm{H}^0(X_0(N),\Omega^1),\C)$.
Let $e_{N} :  \mathrm{H}^0(X_0(N), \Omega^1) \rightarrow \C$ be given by  the homomorphism 
$\omega \mapsto - \int_0^{\infty} \omega$. The modular symbol $e_{N} \in\mathrm{H}_1(X_0(N),\R) $  is called the {\it winding element}.
\end{definition}
The winding elements are the elements of the space of modular symbols whose annihilators define ideals 
of the Hecke algebras with the $L$-functions of the corresponding quotients of the Jacobian non-zero.Since the algebraic part of the special values 
of $L$-function are obtained by integrating differential forms on these modular symbols, our explicit expression of the winding elements are
useful to understand the algebraic parts of the special values at $1$ of the $L$-functions of the quotient $J_0(N)$ \cite{amod}.
In this paper, we find an explicit expression of the winding elements and the Eisenstein elements
[Theorem \ref{Main-thm}].\\
Let $\overline{B}_1(x)$ be the first Bernoulli's polynomial of period $1$ defined by 
$\overline{B}_1(0)=0, \overline{B}_1(x)=x-\frac{1}{2}, \
\mathrm{if} \ x \in (0, 1).$\\
Let  $u, v \in \mathbb{Z}$  such that $\left( u,v \right) = 1$ and $v \geq 1$, we define the Dedekind sum 
by the formula :
\[
S(u,v)=\sum_{t=1}^{v-1} \overline{B}_1(\frac{tu}{v}) \overline{B}_1(\frac{u}{v}). 
\]
For any $k \in \mathbb{Z},$ let $\delta_k \in \{ 0, 1 \}$ such that $\delta_k \equiv k \pmod {2}.$ Let $s_k=(k+(\delta_k-1)N).$ Then $s_k$ is always an odd integer.  Let  $x$ be one of the prime divisor of $N.$ Any coset representative $g \in \sP^1(\Z \slash N \Z)$ can be written as either $(1, \pm t)$ or $(\pm t, 1).$ We observe that
 $(r-1, r+1) \sim (1, \pm t)$ or $(\pm t, 1)$ for some $r$  $\Leftrightarrow$ $t = \pm 1 \pm k x$ for some $k.$ 
 Choose integers 
$s, s'$ and $l, l'$ such that $l(s_kx+2)-2sN=1$ and $l's_kx-2s'\frac{N}{x}=1$.  Let $\gamma_1^{x,k}=\left(\begin{smallmatrix}
1+4sN&  -2l \\
- 4s(s_kx+2)N& 1+4sN\\
\end{smallmatrix}\right)$ 
and $\gamma^{x,k}_2=\left(\begin{smallmatrix}
1+4s'\frac{N}{x} &  -2l' \\
- 4s'(s_k)N& 1+4s'\frac{N}{x}\\
\end{smallmatrix}\right)$ 
For $y=1,2$, consider the integers 
\[
P_m(\gamma^{x,k}_{y})=sgn(t(\gamma^{x,k}_{y}))[2 (S(s(\gamma^{x,k}_y),|t(\gamma^{x,k}_y)|m)-S(s(\gamma^{x,k}_y), |t(\gamma^{x,k}_y)y))
\]
\[
-S(s(\gamma^{x,k}_y),\frac{|t(\gamma^{x,k}_yy)}{2}m)+S(s(\gamma^{x,k}_y), \frac{|t(\gamma^{x,k}_y)y}{2}),]
\]
where 
\[
 s(\gamma_1^{x,k})=1-4sN(1+s_kx), \ t(\gamma_1^{x,k})=-2(l-2s(s_kx+2)N), 
 s(\gamma_2^{x,k})=1-4s'N(s_k-\frac{1}{x}), \ t(\gamma_2^{x,k})=-2(l'-2s's_kN).
 \]
Let $\xi : \SL_2(\Z) \rightarrow \HH_1(X_0(N),\text{cusps},\Z)$ 
be the Manin map [~\ref{Mod}] and $F_m : \ 
  \sP^1(\Z \slash m \Z) \rightarrow \Z$ be defined by  
\begin{equation*}
F_{m}(g) =
\begin{cases}
\ 2(S(r,m)-2S(r,2m)) & \text{if $g=(r-1,r+1)$,} \\
\ P_m(\gamma_1^{x,k})-P_m(\gamma^{x,k}_2) &  \text{if $g=(1+kx,1)$ or $g=(-1-kx,1)$,} \\
-[P_m(\gamma_1^{x,k})-P_m(\gamma^{x,k}_2) ] &  \text{if $g=(1,1+kx)$ or $g=(1,-1-kx)$,} \\
\ 0 &  \text{if $g=(\pm 1,1).$}
\end{cases}
\end{equation*}

\begin{thm}
\label{Main-thm} Let $N$ be odd square-free level.
The modular symbol $$\sE_{E_m}= \sum_{g \in \sP^1(\Z \slash N \Z)} F_m(g) \xi(g) \in 
\HH_1(X_0(N), \partial(X_0(N), \Z) \  \mathrm{is} \ \mathrm{the} \ \mathrm{Eisenstein} \ \mathrm{element}$$
corresponding to the Eisenstein series $E_m \in E_2(\Gamma_0(N)).$
 $$\mathrm{The} \ \mathrm{element} \  e_{N}=\frac{1}{(1-N)}\sum_{v \in(\Z/N\Z)^*} F_{N}((1,v)) \{0,\frac{1}{v}\} \ \mathrm{is} \ \mathrm{the} \ \mathrm{winding} \
  \mathrm{element} \ \mathrm{for} \ \Gamma_0(N).$$\end{thm}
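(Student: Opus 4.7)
By the uniqueness statement in the definition of the Eisenstein element, to establish the first assertion it suffices to verify that the intersection product $\sE_{E_m} \circ c$ equals the period $\pi_{E_m}(c)$ for every class $c$ in a chosen generating set of $\HH_1(Y_0(N),\Z)$. Any $\Z$-linear combination of Manin symbols $\xi(g)$ automatically lies in $\HH_1(X_0(N),\partial(X_0(N)),\Z)$, so no further integrality or boundary check is needed. The natural test classes are the cycles $\{z_0,\gamma z_0\}$ for $\gamma \in \Gamma_0(N)$, and the proof reduces to matching both sides as explicit functions of $\gamma$.

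\textbf{Periods of Eisenstein series as Dedekind sums.} Following Merel's strategy in the prime-level case, I would derive a closed form for the period $\pi_{E_m}(\{z_0,\gamma z_0\}) = \int_{z_0}^{\gamma z_0} E_m(z)\,dz$ as a $\Z$-linear combination of Dedekind sums attached to the lower row of $\gamma$. This closed form is precisely the quantity $P_m(\gamma)$ appearing in the statement, obtained by writing $E_m$ as a linear combination of $G_2$-type series and using the standard transformation of $G_2$ under $\SL_2(\Z)$ (which accounts for the appearance of $\overline{B}_1$ and the Dedekind sums). The coefficients $F_m(g)$ are then reverse-engineered so that, after the intersection products $\xi(g) \circ \{z_0,\gamma z_0\}$ are expanded using Merel's combinatorial recipe, the sum telescopes onto the expression $P_m(\gamma)$.

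\textbf{Case analysis.} Every element of $\sP^1(\Z/N\Z)$ falls into one of the four classes enumerated before the theorem, and the formula for $F_m(g)$ must be verified in each. For $g=(r-1,r+1)$ the coefficient $2(S(r,m)-2S(r,2m))$ is obtained exactly as in the prime case, because a single Bezout lift into $\Gamma_0(N)$ suffices. For $g=(\pm 1,1)$ the associated Manin cycle is trivial in the period, hence $F_m(g)=0$. The genuinely new case, and the main technical obstacle, is $g=(1,\pm 1\pm kx)$ and $g=(\pm 1 \pm kx,1)$ for a proper prime divisor $x \mid N$: here one cannot lift $g$ to $\Gamma_0(N)$ in a single step that simultaneously reflects its class modulo every prime of $N$, and so one introduces two lifts $\gamma_1^{x,k}, \gamma_2^{x,k}$, the first controlling the reduction modulo $x$ and the second modulo $N/x$. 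The coefficient is the corresponding difference $P_m(\gamma_1^{x,k}) - P_m(\gamma_2^{x,k})$, and the heart of the proof is to show that this difference is (a) independent of the Bezout choices $s,l,s',l'$ entering the construction of the $\gamma_i^{x,k}$, and (b) independent of which prime divisor $x\mid N$ is used when $g$ admits several such presentations. Both independence statements reduce to Rademacher's three-term reciprocity for Dedekind sums combined with the congruence behaviour of $S(\cdot,\cdot)$ under multiplication of the numerator by an integer coprime to the denominator, applied to pairs of lifts that differ by an element of the principal congruence subgroup.

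\textbf{The winding element.} For the second assertion I specialise to $m=N$, noting that the constant term $a_0(E_N)=1-N$ is what produces the normalising factor $1/(1-N)$. Decomposing the path $\{0,\infty\}$ via the standard Manin relation into the sum of $\{0,1/v\}$ for $v$ running over a set of representatives and using the period identity $\pi_{E_N}(\{0,1/v\}) = F_N((1,v))$ obtained above, a direct linear manipulation translates the explicit Eisenstein formula into the proposed formula for $e_N$. The final point to verify is that, under the isomorphism $\HH_1(X_0(N),\Z)\otimes\R \simeq \Hom_{\C}(\HH^0(X_0(N),\Omega^1),\C)$, the resulting combination indeed pairs with a cusp form $f$ to give $-\int_0^\infty f(z)\,dz$; this follows because $E_N$ is the unique old/new Eisenstein series with period $\{0,\infty\}$ normalised by $1-N$, and cusp forms are orthogonal to the Eisenstein subspace under the intersection pairing.
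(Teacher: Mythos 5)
Your overall plan---verify $\sE_{E_m}\circ c=\pi_{E_m}(c)$ directly on the generators $\{z_0,\gamma z_0\}$ of $\HH_1(Y_0(N),\Z)$ and match both sides as explicit Dedekind-sum expressions---is not the route the paper takes, and as stated it has a genuine gap. The step you wave through as ``expand $\xi(g)\circ\{z_0,\gamma z_0\}$ using Merel's combinatorial recipe, and the sum telescopes'' is precisely the hard part, and it cannot be carried out on $X_0(N)$ itself: the pairing $\HH_1(X_0(N),\partial(X_0(N)),\Z)\times\HH_1(Y_0(N),\Z)\to\Z$ has no diagonal Manin-symbol expansion because the geodesics $\xi(g)$ pass through the elliptic points, and because $\xi$ is surjective but far from injective (the two- and three-term Manin relations), so the coefficients $F_m(g)$ are not even determined by the class $\sE_{E_m}$ and ``reverse-engineering'' them is ill-posed. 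Merel's recipe that does give a diagonal pairing (Proposition \ref{rho-value}, Theorem \ref{int-value}) lives on the auxiliary spaces $\HH_1(X_0(N)-R\cup I,\partial(X_0(N)),\Z)$ and $\HH_1(X_\Gamma-P_\mp,P_\pm,\Z)$, i.e.\ exactly the ``further boundary checks'' your first paragraph declares unnecessary. This is also where the integrand $2E_m(z)-E_m(\tfrac{z+1}{2})$, the paths from $g(1)$ to $g(-1)$, and the normalising factor $6$ come from; none of these are visible in your outline.

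The paper's actual logic is: (i) compute the \emph{even} Eisenstein element $\sE^0_{E_m}=\sum_g F_m(g)\xi^0(g)$ for free, because on $X_\Gamma$ the symbols $[g]^0$ form a genuine dual basis (Theorems \ref{int-iso}, \ref{int-value} and \ref{psi-values}), with $F_m(g)=\varphi_{E_m}(\xi_0(g))$ evaluated case by case in Proposition \ref{exceptional}---this is where the two matrices $\gamma_1^{x,k},\gamma_2^{x,k}$ and the differences of $P_m$-values arise, and where independence of the Bezout choices is proved (via the homomorphism property of $\varphi_{E_m}$, not via Dedekind reciprocity as you propose); (ii) show that $\pi_*(\sE^0_{E_m})$ and $6\sE'_{E_m}$ have the same boundary $-6\delta(E_m)$ (Propositions \ref{boundact} and \ref{bound}); (iii) conclude they agree after $i_*$ because both annihilate all holomorphic differentials and the pairing with $S_2(\Gamma_0(N))$ is non-degenerate on interior homology. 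Your proposal contains no analogue of steps (ii) and (iii), which are the logical engine that turns the explicit even element into the Eisenstein element. Your treatment of the winding element is closer in spirit (specialise to $m=N$ and use the vanishing of $a_0(E_N[\tfrac1m])$ at the intermediate cusps), but it too rests on the boundary computation you have omitted. To repair the proof you would need to reintroduce the relative homology groups and the boundary/period characterisation rather than attempt a direct intersection computation on $X_0(N)$.
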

In \cite{debargha} and  \cite{MR3463038}, Eisenstein elements are described for $N=p^2,$ $N=pq$ respectively. In this article, we give an explicit description in terms of two matrices $\gamma_1^{x,k}$ and 
$\gamma_2^{x,k}$ for $N$ odd square-free level.
 Note that if $h=gcd(N-1,12)$ and $n=\frac{N-1}{h}$, then a multiple of winding element $ne_{N}$ belongs to 
$\HH_1(X_0(N), \Z)$.
Manin-Drinfeld proved that the modular symbol 
$\{0, \infty \} \in \HH_1(X_0(N), \Q).$  For the congruence subgroup $\Gamma_0(N)$ with $N$ square-free, we use the relative homology group $\HH_1(X_0(N), R \cup I, \Z)$ and $\HH_1(X_\Gamma-P_{-},P_{+},\Z).$ We intersect with the congruence 
subgroup $\Gamma(2)$ to ensure that the Manin maps become bijective (rather than only 
surjective). These 
relative homology groups are used in the study of modular symbol by the discovery of Merel. We follow his approach [cf.~\cite{MR1405312}, Prop. 11] to prove our results.\\
This paper is arranged as follows. In Section 2, we write down some preliminaries. In Section 3, we compute the coset representatives and cusps. In Section 4, we explicitly write down the even Eisenstein elements [Theorem \ref{psi-values}]. In Sections 5 and 6, we write the boundary of the Eisenstein series and Eisenstein elements. In Section 7, we porve our main theorem.
\section{Preliminaries}
For any natural number $M>4$, the congruence subgroup $ \{ \Gamma_0(M) = \left(\begin{smallmatrix}
a &  b\\
c & d\\
\end{smallmatrix}\right) \in \mathrm{SL}_2(\Z) /  M \mid c \}.$ This group $\Gamma_0(M)$
acts on  $\tH \cup  \sP^1(\Q) = \overline{\tH} .$  
Let  $Y_0(M)$ be the quotient space $\Gamma_0(M) \backslash \tH.$
These are non-compact Riemann surfaces and hence algebraic curves defined over $\C$. 
There are  models of these algebraic curve defined over $\Q$ and they parametrize elliptic curves with 
cyclic subgroups of order $M$.  Let $X_0(M)$ be the compactification of $Y_0(M).$ The set of cusps of $X_0(M)$ is given by $\Gamma_0(M) \backslash \sP^1(\Q).$ The modular curve $X_0(M) = Y_0(M) \cup \partial(X_0(M))$ is a 
smooth projective curve $X_0(N)$ defined over 
$\Q,$ we have $\Gamma_0(N ) \backslash \overline{\tH} \simeq X_0(N)(\C).$
We are interested to understand the $\Q$-structure of $X_0(N)$.  Let $\Gamma \subset \SL_2(\Z)$ be a congruence subgroup. The topological space 
$X_{\Gamma}(\C) =\Gamma \backslash \overline{\tH}$ has a natural structure of a smooth compact  Riemann surface.  The projection map  $\pi  : \overline{\tH} \rightarrow X_{\Gamma}(\C)$ is unramified outside the elliptic points and the 
set of cusps $\partial(X_{\Gamma})$. Both these sets are finite. 
\subsection{Classical modular symbols}
\label{Modularsymbols}
Recall the following fundamental theorem of Manin. 
\begin{thm}
\label{MaintheoremManin}
For $\alpha \in \overline{\tH}$, consider the map $c: \Gamma \rightarrow \HH_1(X_0(N), \Z)$
defined by $c(g)=\{\alpha, g \alpha\}.$
The map $c$ is a surjective group homomorphism  and does not depend on $\alpha$.
In fact, the kernel of this homomorphism is generated by the commutator, the elliptic elements, 
 and the parabolic elements of the congruence subgroup $\Gamma.$ In particular, $\{\alpha, g\alpha\}=0$ for all $\alpha \in \sP^1(\Q)$ 
and $g \in \Gamma$.

\end{thm}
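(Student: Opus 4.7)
The plan is to recognize $c$ as the composition
\[
\Gamma \twoheadrightarrow \pi_1^{\mathrm{orb}}(Y_\Gamma) \twoheadrightarrow \mathrm{H}_1(Y_\Gamma, \Z) \twoheadrightarrow \mathrm{H}_1(X_\Gamma, \Z)
\]
arising from the orbifold covering $\overline{\tH} \to X_\Gamma$, and to track what each arrow kills. The geometric part — well-definedness, additivity, surjectivity — is rapid; the nontrivial content is the identification of the kernel, and for that I would treat the three factors in turn.

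First I would prove independence of the basepoint. Given $\alpha, \alpha' \in \overline{\tH}$, choose any path $\sigma$ in $\overline{\tH}$ from $\alpha$ to $\alpha'$. Its $g$-translate $g\sigma$ runs from $g\alpha$ to $g\alpha'$, and the closed quadrilateral
\[
\{\alpha, g\alpha\} * (g\sigma) * \{g\alpha', \alpha'\} * \sigma^{-1}
\]
bounds a $2$-chain in $\overline{\tH}$ by contractibility. Pushing this chain forward along $\pi$ and using $\pi \circ g = \pi$ to cancel $\pi_*\sigma$ with $\pi_*(g\sigma)$ gives $\{\alpha,g\alpha\}=\{\alpha',g\alpha'\}$ in $\mathrm{H}_1(X_\Gamma,\Z)$. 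With independence of the basepoint in hand, the homomorphism property is immediate: splitting the path $\alpha \to gh\alpha$ at $g\alpha$ yields $c(gh) = \{\alpha,g\alpha\} + \{g\alpha, gh\alpha\} = c(g) + c(h)$ after applying independence to the second symbol. For surjectivity, I would fix $\alpha_0 \in \tH$ away from every elliptic point, so $\pi$ is a local homeomorphism near $\alpha_0$; any class in $\mathrm{H}_1(X_\Gamma,\Z)$ is represented by a loop at $\pi(\alpha_0)$, which lifts uniquely to a path in $\overline{\tH}$ beginning at $\alpha_0$ and ending at $g\alpha_0$ for some $g \in \Gamma$, whence $c(g)$ is the prescribed class.

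The main obstacle is identifying $\ker c$, which I would split into the three stages of the composition above. The kernel of $\Gamma \twoheadrightarrow \pi_1^{\mathrm{orb}}(Y_\Gamma)$ is, by the standard covering-space description of an orbifold with simply-connected upstairs space, the normal closure of the elliptic isotropy subgroups — i.e.\ it is generated by the elliptic elements of $\Gamma$. The passage from $\pi_1$ to $\mathrm{H}_1$ kills exactly the commutator subgroup $[\Gamma,\Gamma]$. Finally, filling in the cusps to pass from $Y_\Gamma$ to $X_\Gamma$ kills the classes of small loops around each puncture, and under the covering relation these loops are realized by $c(p_s)$ where $p_s$ generates the parabolic stabilizer of a lift of the cusp $s$. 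Assembling the three stages, $\ker c$ is generated by commutators together with the elliptic and parabolic elements, as claimed. The final assertion is the tautological verification that for a parabolic $g$ fixing the cusp $\alpha \in \sP^1(\Q)$ one has $g\alpha = \alpha$, so that $\{\alpha, g\alpha\}$ is already trivial at the chain level, confirming that parabolic elements are indeed in the kernel.
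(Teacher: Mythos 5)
The paper does not actually prove this statement: it is recalled as ``the fundamental theorem of Manin'' and used as a black box, so there is no in-paper argument to compare against. Judged on its own terms, your proposal follows the standard route (factor $c$ through $\pi_1$ of the open modular curve and its abelianization, then fill in the cusps) and is correct in outline. A few points need tightening. First, in the homomorphism step, splitting the path at $g\alpha$ literally gives $c(gh)=c(g)+\{g\alpha,gh\alpha\}$ with $gh\alpha=(ghg^{-1})(g\alpha)$, i.e.\ $c(g)+c(ghg^{-1})$, not $c(g)+c(h)$; you should either split at $h\alpha$ instead, or note that the geodesic from $g\alpha$ to $gh\alpha$ is the $g$-translate of the geodesic from $\alpha$ to $h\alpha$ and so has the same image under $\pi$ --- the same cancellation that drives your basepoint-independence argument. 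Second, in the surjectivity step, $\pi$ being a local homeomorphism near $\alpha_0$ does not by itself give path lifting for the whole loop, which may pass through elliptic points or cusps where $\pi$ is ramified; one must first homotope the representing loop off this finite set and then lift through the genuine covering of the punctured curve. Third, the group you denote $\pi_1^{\mathrm{orb}}(Y_\Gamma)$ should be the ordinary fundamental group of the underlying surface (the orbifold fundamental group of $\Gamma\backslash\tH$ is $\Gamma$ itself); the assertion that its kernel in $\Gamma$ is normally generated by the elliptic (torsion) elements is Armstrong's theorem, which is exactly the input you need.

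One further remark in your favour: the paper's closing sentence, read literally, claims $\{\alpha,g\alpha\}=0$ for \emph{all} $\alpha\in\sP^1(\Q)$ and \emph{all} $g\in\Gamma$, which is false whenever the curve has positive genus, since by basepoint independence $\{\alpha,g\alpha\}=c(g)$ even for $\alpha$ a cusp and $c$ is surjective. Your reading --- that the intended content is the vanishing for parabolic $g$ fixing the cusp $\alpha$, which places the parabolic elements in the kernel --- is the correct interpretation, and your verification of it is fine.
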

\subsection{The Manin map}
  Let $T, S$ be the matrices $\left(\begin{smallmatrix}
1 & 1\\
0 & 1\\
\end{smallmatrix}\right),\left(\begin{smallmatrix}
0 & -1\\
1 & 0\\
\end{smallmatrix}\right)$
and  $R=ST$ be the matrix $\left(\begin{smallmatrix}
0 & -1\\
1 & 1\\
\end{smallmatrix}\right)$. 
The modular group $\SL_2(\Z)$ is generated by 
$S$ and $T$. 
\begin{thm}[Manin] \cite{MR0314846}
\label{Mod}
Let 
$\xi : \mathrm{SL}_2(\Z) \rightarrow \HH_1(X_0(N),\partial(X_0(N)),\Z)$
be the map that takes $g \in \mathrm{SL}_2(\Z)$ to 
the class in $\HH_1(X_0(N),\partial(X_0(N)), \Z)$ of the image in 
$X_0(N)$ of the geodesic in $\tH \cup \sP^1(\Q)$ joining $g.0$ and $g.\infty.$ 
Then $\xi$ is surjective and
$ \forall \ g \in \Gamma_0(N) \backslash \SL_2(\Z), \
\xi(g)+\xi(g S)=0$ and $\xi(g)+\xi(g R)+\xi(g R^2)=0.$
\end{thm}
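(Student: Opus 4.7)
The plan is to verify the two algebraic relations first by direct inspection of the action of $S$ and $R$ on $\{0,-1,\infty\} \subset \sP^1(\Q)$, and then to deduce surjectivity from the classical Manin continued fraction algorithm, using Theorem~\ref{MaintheoremManin} to absorb closed-loop contributions.

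To prove the relations, I will use that $S.0 = \infty$ and $S.\infty = 0$, so the geodesic from $gS.0$ to $gS.\infty$ is the geodesic defining $\xi(g)$ traversed with reversed orientation; reversing orientation negates a class in relative homology, yielding $\xi(g S) = -\xi(g)$. For $R = ST$, I will compute $R.0 = -1$, $R.(-1) = \infty$, $R.\infty = 0$, so that $\xi(g)$, $\xi(gR)$, $\xi(gR^2)$ are represented respectively by the oriented geodesics $g.0 \to g.\infty$, $g.(-1) \to g.0$, $g.\infty \to g.(-1)$. These three arcs concatenate, after cyclic rearrangement, into the oriented boundary of the closed ideal triangle in $\overline{\tH}$ with vertices $g.0, g.\infty, g.(-1)$; this triangle descends to a singular $2$-chain in $X_0(N)$ whose boundary is exactly $\xi(g) + \xi(gR) + \xi(gR^2)$, giving the three-term relation.

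For surjectivity, I would argue that every relative $1$-cycle in $(X_0(N), \partial(X_0(N)))$ is homologous, by simplicial approximation made transverse to the finite cusp set, to an integer combination of projected geodesic arcs $\{a,b\}$ with $a, b \in \sP^1(\Q)$. The additivity $\{a,b\} = \{a,c\} + \{c,b\}$, proved by inserting the ideal triangle on $a,b,c$ as in the $R$-relation, reduces the problem to showing that every $\{0, r\}$ with $r = p/q \in \Q$ lies in the image of $\xi$. I will take the continued-fraction convergents $p_i/q_i$ of $p/q$, with $p_0/q_0 = 0/1$ and $p_n/q_n = p/q$; using $p_{i-1}q_i - p_i q_{i-1} = (-1)^i$, I set
\[
g_i = \left(\begin{smallmatrix} p_{i-1} & (-1)^i p_i \\ q_{i-1} & (-1)^i q_i \end{smallmatrix}\right) \in \SL_2(\Z),
\]
so that $g_i.0 = p_{i-1}/q_{i-1}$ and $g_i.\infty = p_i/q_i$, giving $\xi(g_i) = \{p_{i-1}/q_{i-1}, p_i/q_i\}$. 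Telescoping then yields $\{0, p/q\} = \sum_i \xi(g_i)$, as required.

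The hard part will be the geometric verification that the ideal triangle on three cusps $a, b, c \in \sP^1(\Q)$ genuinely descends to a singular $2$-chain in $X_0(N)$ with boundary $\{a,b\} + \{b,c\} + \{c,a\}$; elliptic points lying on the edges require a mild perturbation or a barycentric subdivision, but do not affect the boundary in the relative group because the local contributions cancel. Once this triangle principle is in hand, the sign bookkeeping in the continued-fraction step and the verification of the two explicit identities on $S$ and $R$ are direct consequences of the action of $\SL_2(\Z)$ on the cusps of the standard fundamental domain.
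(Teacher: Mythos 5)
The paper offers no proof of this statement: it is quoted as Manin's theorem from \cite{MR0314846}, so there is no in-paper argument to compare against. Your proposal essentially reconstructs Manin's original proof and is correct in substance. The verification $S.0=\infty$, $S.\infty=0$ does give $\xi(gS)=-\xi(g)$, and the $3$-cycle $0\mapsto -1\mapsto\infty\mapsto 0$ under $R$ correctly exhibits $\xi(g)+\xi(gR)+\xi(gR^2)$ as the boundary of the singular $2$-simplex obtained by projecting the ideal triangle on $g.0$, $g.\infty$, $g.(-1)$; the same triangle principle yields the additivity $\{a,b\}=\{a,c\}+\{c,b\}$ that drives the continued-fraction telescoping. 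Two points should be tightened. First, with the convention $g.z=\frac{az+b}{cz+d}$ your matrix $g_i=\left(\begin{smallmatrix} p_{i-1} & (-1)^i p_i \\ q_{i-1} & (-1)^i q_i\end{smallmatrix}\right)$ sends $\infty$ to $p_{i-1}/q_{i-1}$ and $0$ to $p_i/q_i$ --- the opposite of what you wrote --- so $\xi(g_i)=\{p_i/q_i,\,p_{i-1}/q_{i-1}\}$; this only flips a sign in the telescoping (or replace $g_i$ by $g_iS$), but it should be recorded correctly. Second, the reduction of an arbitrary relative $1$-cycle to an integer combination of cusp-to-cusp geodesics is not really delivered by ``simplicial approximation transverse to the cusps''; the clean route is the long exact sequence of the pair $(X_0(N),\partial(X_0(N)))$, which shows that $\HH_1(X_0(N),\partial(X_0(N)),\Z)$ is generated by the image of $\HH_1(X_0(N),\Z)$ together with any family of paths whose boundaries generate the degree-zero cuspidal divisors, combined with Theorem~\ref{MaintheoremManin} (with $\alpha=0$) to realize the closed part by loops $\{0,\gamma.0\}$, $\gamma\in\Gamma_0(N)$, which are again cusp-to-cusp geodesics. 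Finally, your worry about elliptic points on the edges of the ideal triangle is unnecessary: a singular $2$-chain only requires continuity of the projection $\overline{\tH}\to X_0(N)(\C)$, which holds everywhere, so no perturbation or subdivision is needed.
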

\subsection{Manin-Drinfeld theorem}
We state the Manin-Drinfeld 
theorem. 
\begin{thm}[Manin-Drinfeld]
\label{ManinDrinfeld}
 \cite{MR0318157}
Let $\Gamma$ be a congruence subgroup and $\alpha, \beta \in \sP^1(\Q)$ be any two cusps. 
$$ \mathrm{The} \ \mathrm{path} \
\{ \alpha, \beta \} \in \HH_1(X_{\Gamma}, \Q).
$$
\end{thm}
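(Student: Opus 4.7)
The statement is the classical Manin--Drinfeld theorem; my plan is to reduce it to torsion in the Jacobian and then use a single Hecke operator to force that torsion.

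First I would reformulate the conclusion. By construction $\{\alpha,\beta\}\in\HH_1(X_\Gamma,\R)$ is the linear functional $\omega\mapsto -\int_\alpha^\beta \omega$ viewed inside $\HH_1(X_\Gamma,\R)\iso\Hom_\C(\HH^0(X_\Gamma,\Omega^1),\C)$. Under the Abel--Jacobi identification $J(X_\Gamma)(\C)\iso\HH_1(X_\Gamma,\R)/\HH_1(X_\Gamma,\Z)$, the class of $\{\alpha,\beta\}$ corresponds to the degree-zero divisor class $[\beta]-[\alpha]$. Since the torsion subgroup of the real torus $\HH_1(X_\Gamma,\R)/\HH_1(X_\Gamma,\Z)$ is exactly $\HH_1(X_\Gamma,\Q)/\HH_1(X_\Gamma,\Z)$, the assertion $\{\alpha,\beta\}\in\HH_1(X_\Gamma,\Q)$ is equivalent to $[\beta]-[\alpha]$ being a torsion point of $J(X_\Gamma)(\C)$; hence the theorem reduces to the finiteness of the cuspidal subgroup $C\subset J(X_\Gamma)$ generated by all differences $[c_1]-[c_2]$ of cusps.

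Next I would invoke the Hecke algebra $\mathbb{T}$ acting on $J(X_\Gamma)$ and, for primes $\ell\nmid N$ (where $N$ is the level of $\Gamma$), establish two facts about the operator $T_\ell-(\ell+1)$. Fact (i): $T_\ell-(\ell+1)$ kills $C$ up to torsion; equivalently, $T_\ell([c])-(\ell+1)[c]$ is a principal divisor on $X_\Gamma$ for every cusp $c$. This is the standard assertion that the Eisenstein ideal annihilates the cuspidal divisor class group, proved by exhibiting an explicit ratio of Eisenstein series whose divisor realises $T_\ell([c])-(\ell+1)[c]$, matched with the fact that $T_\ell$ acts on weight-$2$ Eisenstein series by the eigenvalue $\ell+1$. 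Fact (ii): for $\ell$ sufficiently large, $T_\ell-(\ell+1)$ is an isogeny of $J(X_\Gamma)$. By Eichler--Shimura, $\HH_1(X_\Gamma,\Q)\otimes\C\iso S_2(\Gamma)\oplus\overline{S_2(\Gamma)}$, and $T_\ell$ acts with eigenvalues the Fourier coefficients $a_\ell(f)$ of normalized eigenforms; Deligne's bound $|a_\ell(f)|\le 2\sqrt{\ell}$ (already the classical Hecke bound $a_\ell(f)=O(\ell^{1/2+\varepsilon})$ suffices) forces $a_\ell(f)\ne\ell+1$, so $T_\ell-(\ell+1)$ is invertible on $\HH_1(X_\Gamma,\Q)\otimes\C$.

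Picking a single sufficiently large $\ell\nmid N$ and combining: by Fact (ii), $T_\ell-(\ell+1)$ is an isogeny of $J(X_\Gamma)$ with finite kernel; by Fact (i), $[\beta]-[\alpha]\in C$ lies in that kernel; so $[\beta]-[\alpha]$ is a torsion point of $J(X_\Gamma)(\C)$, and the reformulation from the first step completes the proof. The main obstacle is Fact (i): showing explicitly that $T_\ell-(\ell+1)$ annihilates the cuspidal divisor class group. This requires tracking the action of the Hecke correspondence on the cusps of $X_\Gamma$ (whose structure is intricate for general congruence subgroups) and producing explicit principal divisors with prescribed cuspidal support; the remaining steps are either formal or a routine appeal to bounds on cuspidal Hecke eigenvalues.
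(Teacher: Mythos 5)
Your proposal is the standard Hecke-operator proof of Manin--Drinfeld (reduce to finiteness of the cuspidal subgroup of the Jacobian, then apply $T_\ell-(\ell+1)$, which annihilates cuspidal divisor classes but is invertible on $S_2(\Gamma)$), which is precisely the approach the paper attributes to the cited source \cite{MR0318157}; the paper offers no proof of its own beyond the remark that the theorem is proved ``using the extended action of the usual Hecke operators,'' so your argument matches the intended one and is correct. One small caveat: the classical Hecke bound for weight-two cusp forms is $a_\ell(f)=O(\ell)$, not $O(\ell^{1/2+\varepsilon})$, so Fact (ii) genuinely requires the weight-two Ramanujan bound $|a_\ell(f)|\le 2\sqrt{\ell}$ (available classically via Eichler--Shimura and Weil, well before Deligne) or at least the Rankin--Selberg improvement, rather than Hecke's trivial estimate.
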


As a corollary, we observe that $\{x, y\} \in \HH_1(X_{\Gamma}, \Q)$ if and if $m(\pi_{\Gamma}(x)-\pi_{\Gamma}(y))$ is a divisor of a function for some positive integer 
$m$. That is, 
the degree zero divisors supported on the cusps are of finite order in the divisor class 
group. Manin-Drinfeld proved it using the extended action of the usual Hecke operators.
In particular, it says that $\{0, \infty\} \in \HH_1(X_{\Gamma}, \Q).$
We have a short exact sequence,
$0 \rightarrow  \HH_1(X_0(N),\Z) \rightarrow \HH_1(X_0(N), \partial(X_0(N)),\Z)  \rightarrow {\Z}^{\partial(X_0(N))} \rightarrow \Z \rightarrow 0.
$
The first map is a canonical injection. The second map $
\delta'$ is a a boundary map which takes a geodesic,
joining the cusps $r$ and $s$ to 
the formal symbol $[r]- [s]$ and the third map is the sum of the coefficients. 
\subsection{Relative homology group $ \HH_1(X_0(N) - R \cup I, \partial(X_0(N)), \Z)$ and almost Eisenstein series}
\label{relative homology} Lret $\nu$ be the geodesic joining the elliptic points $i$ and $\rho=\frac{1+\sqrt{-3}}{2}$ of $X_0(N).$
Set $R = \pi(\SL_2(\Z)\rho)$ and
$I = \pi(\SL_2(\Z)i).$ A small checks shows that these two sets are disjoint.

 For $g \in  \SL_2(\Z)$, let
$[g]_*$ be the class of $\pi(g \nu)$ in the relative homology group $\HH_1(Y_0(N), R \cup I, \Z)$. Let $\rho^* = -\overline{\rho}$  be another
point on the boundary of the fundamental domain. The homology groups $\HH_1(Y_0(N), \Z)$ are subgroups of $\HH_1(Y_0(N), R \cup I, \Z)$. 
Suppose $z_0 \in  \tH$ be such that $|z_0| = 1$ and $\frac{-1}{2} < Re(z_0 ) < 1$. Let $\gamma$ be the union of the geodesic in $\tH \cup \sP^1(\Q)$
joining $0$ and $z_0$ and $z_0$ and $i \infty$. For $g \in \Gamma_0(N) \backslash \SL_2(\Z)$, let $[g]^*$ be the class of $\pi(g \gamma)$
in $\HH_1(X_0(N)- R \cup I, \partial(X_0(N)), \Z).$
We have an intersection pairing 
$$
\circ : \HH_1(X_0(N) - R \cup I, \partial(X_0(N)), \Z) \times \HH_1(Y_0(N), R \cup I, \Z) \rightarrow \Z \
\mathrm{and} \ \mathrm{the} \ \mathrm{following} \ \mathrm{results} \ \mathrm{of} \ \mathrm{Merel}.$$

\begin{prop}[Prop.\,1, Cor.\,1, \cite{MR1363498}].
\label{rho-value} For $g,h \in \SL_2(\Z),$ we have 
\begin{equation*}
 [g]^* \circ [h]_*= \begin{cases} $1$ & \text{if $\Gamma_0(N) g = \Gamma_0(N) h$} \\
$0$ & \text{otherwise.}
\end{cases}
\end{equation*}
\end{prop}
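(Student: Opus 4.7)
My plan is to lift both cycles to the upper half plane $\tH$ and compute the intersection pairing by summing signed local intersections modulo the action of $\overline{\Gamma_0(N)} := \Gamma_0(N)/\{\pm I\}$, which acts faithfully on $\tH$. Because the boundaries $\partial[g]^* \subset \partial(X_0(N))$ and $\partial[h]_* \subset R \cup I$ are disjoint, the pairing is well defined, and unwinding the quotient yields
\[
[g]^* \circ [h]_* \;=\; \sum_{\gamma_0 \in \overline{\Gamma_0(N)}} \bigl\langle \gamma,\, M\nu \bigr\rangle_{\tH}, \qquad M := g^{-1}\gamma_0 h \in \mathrm{PSL}_2(\Z).
\]
It then suffices to show that $\gamma \cap M\nu$ is empty inside $\tH$ unless $M = 1$, in which case it is the single transverse intersection $\{z_0\}$.

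For the geometric core, I choose $z_0$ strictly interior to $\nu$ with $\mathrm{Re}(z_0) \in (0, 1/2)$, so $z_0$ is not elliptic. The vertical segment $\gamma_2$ then lies in the standard fundamental domain $\sF$ of $\mathrm{PSL}_2(\Z)$, meeting $\partial\sF$ only at $z_0 \in \nu$. A short computation shows that $S\gamma_1$ is the vertical ray from $-1/z_0 \in S\nu$ to $i\infty$, also contained in $\sF$; hence $\gamma_1 \subset S\sF$, and $\gamma \subset \sF \cup S\sF$ with $\gamma \cap \partial(\sF \cup S\sF) \cap \tH = \emptyset$. Any $p \in \gamma \cap M\nu$ must therefore lie in $\mathrm{int}(\sF) \cup \mathrm{int}(S\sF) \cup \nu^\circ$. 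In the first two cases, matching tessellation cells at $p$ forces $M\sF = \sF$ (resp.\ $S\sF$), so $M\nu = \nu$ (resp.\ $S\nu$), which then cannot contain a point in the interior of that cell; thus $p \in \nu^\circ$ and so $p = z_0$.

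Finally, $z_0 \in M\nu$ is equivalent to $M^{-1}z_0 \in \nu$. Since $z_0 \in \sF$ has trivial stabilizer in $\mathrm{PSL}_2(\Z)$ and $\nu \subset \sF$ meets the $\mathrm{PSL}_2(\Z)$-orbit of $z_0$ only at $z_0$ itself, this forces $M = 1$ in $\mathrm{PSL}_2(\Z)$. Translating back, $g^{-1}\gamma_0 h \in \{\pm I\}$ for some $\gamma_0 \in \Gamma_0(N)$ is equivalent, using $-I \in \Gamma_0(N)$, to $gh^{-1} \in \Gamma_0(N)$, i.e., $\Gamma_0(N)g = \Gamma_0(N)h$; in that case there is a unique such $\gamma_0 \in \overline{\Gamma_0(N)}$, and the local intersection number at $z_0$ equals $+1$ by a compatible orientation choice on $\gamma$ and $\nu$. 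The main technical obstacle is the geometric lemma that $\gamma \subset \sF \cup S\sF$ has no boundary intersections in $\tH$ besides $z_0$; this reduces to the explicit verification that $S\gamma_1$ is a vertical ray in $\sF$, which in turn depends crucially on placing $z_0$ in the interior of the arc $\nu$.
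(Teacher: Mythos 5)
The paper does not prove this proposition at all—it is quoted verbatim from Merel with the citation [Prop.\,1, Cor.\,1, MR1363498]—so there is no in-paper argument to compare against; your proof is a correct reconstruction of Merel's original one, resting on exactly the same two pillars (unwinding the intersection number over the deck group $\overline{\Gamma_0(N)}$, and the fundamental-domain geometry showing $\gamma\subset\sF\cup S\sF$ meets the $\mathrm{PSL}_2(\Z)$-translates of $\nu$ only at $z_0$, which forces $M=1$). The only points left implicit—that $\gamma$ avoids the elliptic orbits so the local computation at $z_0$ descends to the quotient, and that the sign $+1$ is fixed by the orientation conventions—are routine and consistent with the statement.
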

\begin{cor}
\label{rho-int}
 The homomorphism of groups $\Z^{\Gamma_0(N)\backslash \SL_2(\Z)} \rightarrow \HH_1(Y_0 (N), R \cup I, \Z)$ induced by the map
$$\xi_0 (\sum_g \mu_g g) = \sum_g \mu_g[g]_* \  \mathrm{is} \ \mathrm{an} \ \mathrm{isomorphism}.
$$
\end{cor}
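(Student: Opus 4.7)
\bigskip

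\noindent\textbf{Proof plan for Corollary \ref{rho-int}.}
The plan is to establish injectivity from the perfect pairing of Proposition \ref{rho-value} and surjectivity from the cellular decomposition of $Y_0(N)$ induced by the $\SL_2(\Z)$-translates of $\nu$.

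For injectivity, I would use Proposition \ref{rho-value} to build an explicit left inverse. Set $\eta : \HH_1(Y_0(N),R\cup I,\Z) \rightarrow \Z^{\Gamma_0(N)\backslash \SL_2(\Z)}$ by $\eta(c) = \sum_g ([g]^*\circ c)\, g$, where the sum runs over a system of representatives of $\Gamma_0(N)\backslash \SL_2(\Z)$. Because each $[g]^*$ lies in $\HH_1(X_0(N)-R\cup I,\partial(X_0(N)),\Z)$, which is the target of the intersection pairing, $\eta$ is well defined. Proposition \ref{rho-value} gives $[g]^*\circ[h]_*=\delta_{\Gamma_0(N)g,\Gamma_0(N)h}$, so $\eta\circ\xi_0$ is the identity on $\Z^{\Gamma_0(N)\backslash\SL_2(\Z)}$. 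In particular $\xi_0$ is injective and the $[g]_*$ are $\Z$-linearly independent.

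For surjectivity, I would invoke the classical fact that the $\SL_2(\Z)$-translates of the geodesic $\nu$ joining $i$ and $\rho$ tessellate $\tH$ by ideal triangles whose vertices are the elliptic points $\SL_2(\Z)i \cup \SL_2(\Z)\rho$. Passing to the quotient by $\Gamma_0(N)$ gives a CW decomposition of $Y_0(N)$ in which the $0$-cells lie in $R\cup I$, the $1$-cells are precisely the arcs $[g]_*$ as $g$ runs over $\Gamma_0(N)\backslash\SL_2(\Z)$, and the $2$-cells are the triangular faces. The relative cellular chain complex $(C_\bullet(Y_0(N),R\cup I;\Z))$ has $C_0=0$ because every $0$-cell is collapsed, so the $1$-cells surject onto $\HH_1(Y_0(N),R\cup I,\Z)$. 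Together with the injectivity above, this proves that $\xi_0$ is an isomorphism.

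The main obstacle is the topological input asserting that the $\SL_2(\Z)$-translates of $\nu$ give an honest CW decomposition of $\tH$ and, after the quotient, of $Y_0(N)$ relative to $R\cup I$. This is routine but not quite automatic, since one must check that distinct cosets $\Gamma_0(N)g$ give interior-disjoint $1$-cells in $Y_0(N)$ (guaranteed by the injectivity side) and that no $1$-cell is collapsed to a point (which fails only for elliptic elements fixing $\nu$, and these do not occur for translates of $\nu$). Once this is in place, the corollary follows formally from Proposition \ref{rho-value}, exactly along the lines of [Cor.\,1, \cite{MR1363498}].
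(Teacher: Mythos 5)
Your injectivity argument is correct and is exactly the standard one: Proposition \ref{rho-value} exhibits the classes $[g]^*$ as a dual family, so $\eta(c)=\sum_g([g]^*\circ c)\,g$ is a left inverse of $\xi_0$ and the $[g]_*$ are $\Z$-linearly independent. Note that the paper itself offers no argument for this corollary --- it is quoted directly from [Cor.\,1, \cite{MR1363498}] --- so the relevant comparison is with Merel's proof, whose strategy (duality for injectivity, the combinatorics of the translates of $\nu$ for surjectivity) you have correctly identified.

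The surjectivity half, however, rests on a false topological claim. The $\SL_2(\Z)$-translates of the arc $\nu$ from $i$ to $\rho$ do not cut $\tH$ into triangles with vertices at elliptic points (an ``ideal triangle with vertices at elliptic points'' is already a contradiction in terms, since ideal vertices lie on $\partial\tH$). Their union is an embedded tree $T$, trivalent at the points of $\SL_2(\Z)\rho$ and of valence two at the points of $\SL_2(\Z)i$, and each connected component of $\tH-T$ is an unbounded region whose closure in $\overline{\tH}$ contains exactly one point of $\sP^1(\Q)$. In the quotient $Y_0(N)$ these components become punctured disks, not $2$-cells, so the open surface $Y_0(N)$ admits no CW structure of the kind you describe, and the relative cellular chain complex you invoke does not exist. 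The repair is standard and short: each punctured disk deformation retracts onto its boundary cycle, so $Y_0(N)$ deformation retracts onto the graph $G=\Gamma_0(N)\backslash T$ fixing $R\cup I$ pointwise; hence $\HH_1(Y_0(N),R\cup I,\Z)\cong \HH_1(G,V(G),\Z)$, and the relative homology of a graph modulo its full vertex set is free on its edges. The edges of $G$ are in bijection with $\Gamma_0(N)\backslash\SL_2(\Z)$ because the stabilizer of $\nu$ in $\SL_2(\Z)$ is $\{\pm I\}$. With this substitution your proof closes, and in fact the retraction alone already yields the isomorphism, with Proposition \ref{rho-value} then confirming that the intersection pairing identifies the two sides dually.
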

We have a short exact sequence, $$
0  \rightarrow  \HH_1(X_0(N)-R \cup I ,\Z) \rightarrow \HH_1(X_0(N)-R \cup I , \partial(X_0(N)),\Z)  \rightarrow {\Z}^{\{\partial(X_0(N))\}} \rightarrow \Z \rightarrow 0.$$
The first map is a canonical injection. The second map $\delta$ is the boundary map  which takes a geodesic,
joining the cusps $r$ and $s$ to 
the formal symbol $[r]- [s]$ and the third map is the sum of the coefficients. Note that
$\delta'(\xi(g))=\delta([g]^*)$ for all $g \in \SL_2(\Z)$.
\begin{definition}[Almost Eisenstein elements]
For $m \mid N$, the differential form $E_m(z)dz$ is of first kind on the Riemann surface $Y_0(N) $. Since $\circ$ is a
non-degenerate bilinear pairing, there is an unique element $\sE'_{E_m} \in  \HH_1(X_0(N) - R \cup I, \partial(X_0(N)), \Z)$ such that
$\sE'_{E_m} \circ c = \pi_{E_m}(c)$ for all $c \in  \HH_1(Y_0 (N), R \cup I, \Z)$. We call $\sE'_{E_m}$ the {\it almost Eisenstein element} corresponding
to the Eisenstein series $E_m$.
\end{definition}
\subsection{Modular Curves with bijective manin maps}
The Riemann sphere or the projective complex plane $\sP^1(\C)$ is the only one simply connected (genus zero) compact Riemann surface up to conformal bijections. 
A theorem of Belyi states that every (compact, connected, non- singular) algebraic curve $X$ 
has a model defined over $\overline{\Q}$  if and only if it admits a map to $\sP^1(\C)$ branched over three points. Let $ \Gamma(2) =  \left(\begin{smallmatrix}
a &  b\\
c & d\\
\end{smallmatrix}\right) \in \mathrm{SL}_2(\Z) /   \left(\begin{smallmatrix}
a &  b\\
c & d\\
\end{smallmatrix}\right) \equiv  \left(\begin{smallmatrix}
1 &  0\\
0 & 1\\
\end{smallmatrix}\right) \pmod{2}\}$ and $X(2) =  \Gamma(2) \mod \overline{\tH}.$ Then
$\partial(X(2)) = \{ \Gamma(2) 0, \Gamma(2) 1,
 \Gamma(2) \infty \}.$ Hence, $X(2)$ is the simply 
 connected Riemann surface $\sP^1(\C)$ with the three marked points 
 $\Gamma(2) 0$,  $\Gamma(2) 1$ and 
 $\Gamma(2) \infty.$\\ 
Let  $\Gamma=\Gamma_0(N)
\cap \Gamma(2).$ Let $\pi_0:\Gamma \backslash  \tH \cup \sP^1(\Q) \rightarrow \Gamma(2) \backslash \tH \cup \sP^1(\Q) $ be the map $\pi_0(\Gamma z)=\Gamma(2)z.$ Let $P_{-}=\pi_0^{-1}(\Gamma(2)1),  \ P_{+} = [\pi_0^{-1}(\Gamma(2)0)] \cup [ \pi_0^{-1}(\Gamma(2)\infty)].$
The Riemann surface $X_{\Gamma}$ has boundary $P_{+} \cup P_{-}.$
\subsection{Relative homology group  $\HH_1(X_\Gamma-P_{-},P_{+},\Z)$ and even Eisenstein elements}
We study the relative homology groups $\HH_1(X_\Gamma-P_{-},P_{+},\Z)$ and 
$\HH_1(X_\Gamma-P_{+},P_{-},\Z)$. The intersection pairing is a non-degenerate bilinear pairing 
$\circ:\HH_1(X_\Gamma-P_{+},P_{-},\Z) \times \HH_1(X_\Gamma-P_{-},P_{+},\Z) \rightarrow \Z.$
For $g \in \Gamma \backslash \Gamma(2)$, let $[g]^0$ 
(respectively  $[g]_0$) be the image in $X_\Gamma$ of 
the geodesic in $\tH \cup \sP^1(\Q)$ joining $g0$ and $g\infty$ (respectively  $g1$ and $g(-1)$). 
We state two fundamental theorems.
\begin{thm}[\cite{MR1405312}]
\label{int-iso}
Let $\xi_0: {\Z}^{\Gamma \backslash \Gamma(2)} \rightarrow \HH_1(X_\Gamma-P_{+},P_{-},\Z), \ \xi^0:  {\Z}^{\Gamma \backslash \Gamma(2)} \rightarrow \HH_1(X_\Gamma-P_{-},P_{+},\Z)
$
be the maps which take $g \in \Gamma \backslash \Gamma(2)$ to the element $[g]_0$ and 
 $g \in \Gamma\backslash \Gamma(2)$ to the element $[g]^0$ respectively.\\
The homomorphisms $\xi_0$ and $\xi^0$ are isomorphisms.
\end{thm}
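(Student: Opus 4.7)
The plan is to adapt Merel's argument from \cite{MR1405312}, mirroring the strategy of Proposition~\ref{rho-value} and Corollary~\ref{rho-int}. By symmetry, I focus on $\xi^0$; the argument for $\xi_0$ is identical.

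The key step is the intersection computation
\[
[g]^0 \circ [h]_0 = \begin{cases} 1 & \text{if } \Gamma g = \Gamma h, \\ 0 & \text{otherwise.} \end{cases}
\]
Lifting to $\tH$, the standard geodesics $(0,\infty)$ (the imaginary axis) and $(1,-1)$ (the upper unit semicircle) meet transversally at the unique point $i$. An intersection of $[g]^0$ and $[h]_0$ on $X_\Gamma$ therefore corresponds to an element $\gamma \in \Gamma$ for which $g \cdot (0,\infty)$ meets $\gamma h \cdot (1,-1)$ in $\tH$, and this forces $g^{-1}\gamma h$ to stabilise $i$. The $\SL_2(\Z)$-stabiliser of $i$ is $\{\pm I, \pm S\}$, and of these only $\pm I$ lie in $\Gamma(2)$, so $\Gamma g = \Gamma h$ is forced, and in that case exactly one transverse intersection occurs.

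Injectivity of both $\xi^0$ and $\xi_0$ is immediate from this formula: any null relation $\sum_g \mu_g [g]^0 = 0$, paired with $[h]_0$, yields $\mu_h = 0$. For surjectivity I would use the finite cover $\pi_0 : X_\Gamma \to X(2) \simeq \sP^1(\C)$, which is unramified away from the three cusps. The images of the arcs $(0,\infty)$ and $(1,-1)$ together with their crossing at the image of $i$ give a CW structure on $X(2)$; pulling back under $\pi_0$ produces a CW structure on $X_\Gamma$ whose 1-cells of the two ``colours'' are precisely $\{[g]^0\}$ and $\{[g]_0\}$ indexed by $\Gamma\backslash\Gamma(2)$. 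A Riemann--Hurwitz / Euler-characteristic computation shows that the relative homology group $\HH_1(X_\Gamma - P_{-}, P_{+}, \Z)$ is free of rank $|\Gamma\backslash\Gamma(2)|$; combined with the Kronecker-delta pairing above, this forces $\xi^0$ to be an isomorphism.

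The main obstacle is the rank/cell-decomposition step: one must carefully account for the behaviour of $\pi_0$ at the ramified cusps (where the ramification indices depend on the combinatorial structure of $\Gamma\backslash\Gamma(2)$) and verify that the lifted CW decomposition has the claimed 2-cell structure. The intersection calculation itself is clean and essentially formal once the stabiliser analysis at $i$ is in place; once the rank matches, the Kronecker-delta pairing makes the isomorphism automatic.
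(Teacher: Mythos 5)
The paper offers no proof of this theorem: it is quoted from Merel \cite{MR1405312}, so there is no internal argument to compare against. Your reconstruction does follow Merel's strategy (a Kronecker-delta intersection pairing plus a rank count), and the two computations you defer genuinely work out: Riemann--Hurwitz for the degree-$n$ cover $X_\Gamma \to X(2)$ ($n = |\Gamma\backslash\Gamma(2)|$), unramified away from the cusps, gives $2g-2 = n-c$ with $c = |P_{+}|+|P_{-}|$ the number of cusps of $X_\Gamma$, and the long exact sequence of the pair $(X_\Gamma-P_{-},P_{+})$ shows $\HH_1(X_\Gamma-P_{-},P_{+},\Z)$ is free of rank $2g+c-2 = n$; since the pairing against the $[h]_0$ splits $\xi^0$, a split injection between free $\Z$-modules of equal rank is an isomorphism.

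The one step that does not follow as written is the intersection formula itself. From ``$g\cdot(0,\infty)$ meets $\gamma h\cdot(1,-1)$'' you conclude that $M=g^{-1}\gamma h$ stabilises $i$; but a priori the meeting point $w\in(0,\infty)$ and the corresponding point $M^{-1}w\in(1,-1)$ are two \emph{different} points of $\tH$, so no fixed point of $M$ is exhibited and the stabiliser-of-$i$ computation is not yet relevant. What you actually need is: if $M\in\Gamma(2)$ and $M\cdot(1,-1)$ meets $(0,\infty)$, then $M=\pm I$. This holds because both open arcs lie in the interior of the standard ideal quadrilateral fundamental domain $\{\, |\mathrm{Re}\,z|<1,\ |z\pm\tfrac{1}{2}|>\tfrac{1}{2} \,\}$ of $\Gamma(2)$ (their endpoints are cusps and are excluded), so $w$ and $M^{-1}w$ would be two $\Gamma(2)$-equivalent interior points of one fundamental domain, forcing $M=\pm I$; only then does the unique transverse crossing at $i$ give the value $1$. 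With that repair, and the ramification bookkeeping you already flag for the rank count, the argument is complete and is essentially Merel's.
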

\begin{theorem}[\cite{MR1405312}]
\label{int-value}
For $g, g' \in \Gamma(2)$, we have 
$
 [g]_0 \circ [g']^0= \begin{cases}
1 & \text{if $\Gamma g = \Gamma g'$} \\ 
$0$ & \text{otherwise.}
 \end{cases}
$
\end{theorem}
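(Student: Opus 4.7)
My plan is to compute the intersection pairing directly in the universal cover $\tH$, exploiting that $\Gamma = \Gamma_0(N) \cap \Gamma(2)$ has no non-trivial elements fixing a point of $\tH$ modulo $\{\pm I\}$, so the quotient map $\pi : \tH \to X_\Gamma$ is unramified on the interior. I lift $[g]_0$ to the geodesic $L_1 := g\{1,-1\}$ and $[g']^0$ to $L_2 := g'\{0,\infty\}$ in $\tH$. A standard unfolding of the intersection pairing then yields
\[
[g]_0 \circ [g']^0 \;=\; \sum_{\gamma \in \Gamma/\{\pm I\}} \varepsilon(L_1,\,\gamma L_2),
\]
where each $\varepsilon \in \{0,\pm 1\}$ is the signed transversal intersection in $\tH$. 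The reduction to cosets of $\{\pm I\}$ uses that the setwise stabilizers in $\Gamma$ of the unordered endpoint pairs of $L_1$ and $L_2$ are both $\{\pm I\}$; the non-$\pm I$ stabilizers of $\{0,\infty\}$ and $\{1,-1\}$ in $\SL_2(\Z)$ fail to lie in $\Gamma(2)$ for parity reasons.

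The key technical step is a parity argument. Two hyperbolic geodesics in $\tH$ meet in at most one point, and $g\{1,-1\}$ meets $\gamma g'\{0,\infty\}$ iff, after substituting $\alpha := g^{-1}\gamma g' \in \Gamma(2)$, the endpoints $\alpha\cdot 0 = b/d$ and $\alpha\cdot\infty = a/c$ of $\alpha\{0,\infty\}$ separate the points $1,-1$ on $\sP^1(\R)$, i.e.\ exactly one of $|b/d|,|a/c|$ is less than $1$. I claim this forces $\alpha = \pm I$. For $\alpha = \left(\begin{smallmatrix} a & b \\ c & d \end{smallmatrix}\right) \in \Gamma(2)$, the entries $a,d$ are odd and $b,c$ even, so whenever $b\neq 0$ one has $|b| \geq |d|+1$ and similarly for the other parity gaps. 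A short case analysis in combination with $|ad-bc|=1$ rules out every non-trivial $\alpha$: if $b,c \neq 0$, then $|bc| \geq (|d|+1)(|a|+1) = |ad| + |a|+|d|+1 \geq |ad|+3$, contradicting $\bigl||bc|-|ad|\bigr| \leq 1$; the cases $b=0$ or $c=0$ reduce to $ad=1$ and force $\alpha = \pm I$.

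Granted this claim, the sum collapses to a single term: a non-zero contribution occurs iff $\gamma \in \{\pm gg'^{-1}\} \cap \Gamma$, which is non-empty iff $gg'^{-1} \in \Gamma$, equivalently $\Gamma g = \Gamma g'$. In that case $gg'^{-1}$ and $-gg'^{-1}$ lie in the same $\{\pm I\}$-coset, contributing exactly one intersection point (its canonical lift is $g'\cdot i$); a direct tangent-vector computation at that point fixes the sign to be $+1$ under the chosen orientations of $[g]_0$ and $[g']^0$. The main obstacle is the parity claim of the second paragraph: without the $\Gamma(2)$-congruence, the analogous statement for $\SL_2(\Z)$ is false, and one would be forced into the more elaborate analysis involving the elliptic loci $R\cup I$ used in Merel's Proposition~\ref{rho-value}.
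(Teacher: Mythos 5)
The paper offers no proof of Theorem~\ref{int-value} at all: it is quoted from Merel \cite{MR1405312} as a known input, so there is no internal argument to compare yours against. Your overall strategy (unfold the intersection pairing to the universal cover and reduce to a statement about $\Gamma(2)$-translates of the geodesic $\{0,\infty\}$ crossing $\{1,-1\}$) is sound, your stabilizer computations are correct, and the key geometric claim you isolate is true: $\{1,-1\}$ crosses exactly one edge of the $\Gamma(2)$-orbit of $\{0,\infty\}$, namely $\{0,\infty\}$ itself, hit only by $\alpha=\pm I$.

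However, your justification of that key claim is broken as written. The assertion ``whenever $b\neq 0$ one has $|b|\geq|d|+1$'' is false: $\left(\begin{smallmatrix}3&2\\4&3\end{smallmatrix}\right)\in\Gamma(2)$ has $|b|=2<4=|d|+1$. Consequently the chain $|bc|\geq(|d|+1)(|a|+1)\geq|ad|+3$ does not follow from $b,c\neq0$ alone; if it did, you would have shown that $\Gamma(2)$ contains no matrix with $b$ and $c$ both nonzero, which is absurd. The gap is that the separation hypothesis, which you correctly identify in the sentence before, is never actually used in the case analysis. The repair is direct: the linking condition says exactly one of $|b/d|,|a/c|$ is less than $1$, i.e.\ either ($|b|<|d|$ and $|a|>|c|$) or ($|b|>|d|$ and $|a|<|c|$). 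Strict inequalities between integers give gaps of at least $1$ (and parity rules out the boundary cases $|b|=|d|$, $|a|=|c|$), so in the first case $|ad|\geq(|b|+1)(|c|+1)=|bc|+|b|+|c|+1$ and in the second $|bc|\geq(|d|+1)(|a|+1)=|ad|+|a|+|d|+1\geq|ad|+3$; since $ad-bc=1$ forces $ad$ and $bc$ to be nonzero of the same sign when $b,c\neq0$, both cases contradict $\bigl||ad|-|bc|\bigr|=1$. Your treatment of $b=0$ and $c=0$ is fine. With that correction, and with the asserted sign computation at $g'\cdot i$ actually written out, the argument closes.
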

\begin{definition}
\label{Eisensteinsec} (Even Eisenstein elements).
For $E_m \in \tE_{N}$, let 
$\lambda_{E_m}:X_0(N) \rightarrow \C$ be the rational function whose logarithmic differential is $2 \pi i E_m(z)dz=2\pi i \omega_{E_m}$. 
Consider the rational function $\lambda_{E_m,2}=\frac{(\lambda_{E_m} \circ \pi)^2}{\lambda_{E_m} \circ \pi'}$ on $X_\Gamma$. By Lemma 
~\ref{no-zero}, this function has no zeros and poles in $P_{-}$. Let $\kappa^*(\omega_{E_N})$ be the logarithmic differential of the function.
Let $\varphi_{E_m}(c)=\int_c \kappa^*(\omega_{E_m})$ be the corresponding ``period'' homomorphism $\HH_1(X_\Gamma-P_{+},P_{-},\Z) \rightarrow \Z$. By the non-degeneracy of the intersection pairing, there is a unique element $\sE_{E_m}^0 \in \HH_1(X_\Gamma-P_{-},P_{+},\Z)$ 
such that $\sE^0_{E_m} \circ c =\varphi_{E_m}(c)$ for all $c \in  \HH_1(X_\Gamma-P_{+},P_{-},\Z)$. The modular symbol $\sE_{E_m}^0$ is the {\it even Eisenstein element}
corresponding to the Eisenstein series $E_m$. 
\end{definition}
\subsection{Period Homomorphisms}
\label{Peiods} We state the period homomorphisms for the differential forms of third kind. Refer [\cite{MR553997}, p.\,10, \cite{MR1405312}, p.\,14] for some properties of the following period map $\pi_{E_m}.$
\begin{definition}[Period homomorphism]
For $E_m \in \tE_{N}$,  the differential forms $E_m(z)dz$ are of third kind on the Riemann surface
$X_0(N)$ but of first kind on the non-compact Riemann surface $Y_0(N)$.
 For any $z_0 \in \tH$ and $\gamma \in \Gamma_0(N)$, let $c(\gamma)$ 
be the class in $\HH_1(Y_0(N),\Z)$ of the image in $Y_0(N)$ of the geodesic in $\tH$ joining 
$z_0$ and $\gamma(z_0)$. 
This class is non-zero [Thm.~\ref{MaintheoremManin}] and is independent of the choice of $z_0 \in \tH.$ Let $\pi_{E_m}(\gamma)=\int_{c(\gamma)} E_m(z)dz.$ 
This map $\pi_{E_m}:\Gamma_0(N) \rightarrow \Z$
is the ``period'' homomorphism of $E_m$.
\end{definition}
\begin{prop}
\label{value-pi}
Let $\gamma=\left(\begin{smallmatrix}
a & b\\
c & d\\
\end{smallmatrix}\right)$ be an element of $\Gamma_0(N)$ and $\mu=gcd(m-1,12).$
\begin{enumerate}
 \item 
$\pi_{E_m}$ is a homomorphism $\Gamma_0(N) \rightarrow \Z$ and 
$\pi_{E_m}(\gamma)=\pi_{E_m}(\left(\begin{smallmatrix}
d & \frac{c}{m}\\
mb & a\\
\end{smallmatrix}\right)).$
\item 
The image of $\pi_{E_m}$ lies in $\mu \Z$ and
$\pi_{E_m}(\gamma) = 
\begin{cases}
\frac{a+d}{c} (m-1)+12 sgn(c)(S(d,|c|)-S(d,\frac{|c|}{m})) & \text{if $c \neq 0$,} \\
\frac{b}{d} (m-1) & \text{if $c=0$.}
\end{cases}$
\end{enumerate}
\end{prop}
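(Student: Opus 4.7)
The strategy is to identify $E_m$, up to normalization, with the logarithmic derivative of an explicit modular unit built out of the Dedekind eta function, and then to reduce the period integral to the classical $\eta$-transformation law of Rademacher.

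For part (1), independence of $\pi_{E_m}(\gamma)$ from the base point $z_0$ is a standard consequence of $E_m(z)\,dz$ being a closed $\Gamma_0(N)$-invariant form on $\tH$: varying $z_0$ changes the path $c(\gamma)$ by a cycle on which $E_m(z)\,dz$ integrates to zero (using Theorem~\ref{MaintheoremManin}, since a boundary of a $2$-chain is killed by $d$-closed forms). The homomorphism property $\pi_{E_m}(\gamma_1\gamma_2) = \pi_{E_m}(\gamma_1) + \pi_{E_m}(\gamma_2)$ follows by splitting the geodesic $z_0 \to \gamma_1\gamma_2 z_0$ through $\gamma_1 z_0$ and using $\Gamma_0(N)$-invariance of $E_m(z)\,dz$ to rewrite $\int_{\gamma_1 z_0}^{\gamma_1\gamma_2 z_0}$ as $\int_{z_0}^{\gamma_2 z_0}$. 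The identity $\pi_{E_m}(\gamma) = \pi_{E_m}\bigl(\left(\begin{smallmatrix} d & c/m \\ mb & a \end{smallmatrix}\right)\bigr)$ reflects the Atkin--Lehner involution $w_m$ under which $E_m$ is an eigenform: conjugating $\gamma$ by the scaling $\left(\begin{smallmatrix} m^{1/2} & 0 \\ 0 & m^{-1/2}\end{smallmatrix}\right)$ together with the involution $S$ produces the displayed matrix, and the change-of-variable $z \mapsto mz$ in the defining integral identifies the two periods (noting that $m\mid c$ because $m\mid N$ and $\gamma \in \Gamma_0(N)$).

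For part (2), the case $c = 0$ is immediate: after sign normalization $a = d = \pm 1$, the path from $z_0$ to $z_0 + b/d$ is a horizontal translate, and integrating the constant term of $E_m$ (which equals $\tfrac{m-1}{24}\cdot 24 = m-1$ in the chosen normalization) gives $\tfrac{b}{d}(m-1)$. For $c \neq 0$, one uses that the eta-quotient $\lambda_{E_m}(z) = \eta(mz)^{2m}/\eta(z)^{2}$ (or the appropriate normalization so that $d\log\lambda_{E_m} = 2\pi i\, E_m(z)\,dz$) satisfies
\[
\pi_{E_m}(\gamma) = \frac{1}{2\pi i}\bigl(\log\lambda_{E_m}(\gamma z_0) - \log\lambda_{E_m}(z_0)\bigr).
\]
Applying the Rademacher transformation law for $\log\eta$ both to $\gamma \in \SL_2(\Z)$ and to the matrix $\left(\begin{smallmatrix} a & mb \\ c/m & d\end{smallmatrix}\right) \in \SL_2(\Z)$ (acting on $mz$), the branch terms $\tfrac12\log(cz+d)$ cancel after combining the two contributions with the correct weights, the rational pieces collect into $\tfrac{a+d}{c}(m-1)$, and the Dedekind sum pieces assemble into $12\,\mathrm{sgn}(c)\bigl(S(d,|c|) - S(d,|c|/m)\bigr)$. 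The containment $\pi_{E_m}(\Gamma_0(N)) \subseteq \mu\Z$ then follows from the classical fact that $12 S(d,k) \in \Z$ together with the common factor $m - 1$, so that the image is always divisible by $\gcd(m-1, 12) = \mu$.

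The main technical obstacle I anticipate is bookkeeping with normalizations: pinning down the exact constant in the $\eta$-quotient so that its logarithmic derivative is precisely $2\pi i\, E_m(z)\,dz$, and then carefully tracking the signs that depend on $\mathrm{sgn}(c)$ in Rademacher's formula when subtracting the two transformation laws. The divisibility condition $m\mid c$ (forced by $\gamma\in\Gamma_0(N)$ and $m\mid N$) is what allows the second $\eta$-transformation to be applied to an integer matrix, and is responsible for the appearance of $S(d,|c|/m)$ rather than any expression involving $|c|m$ in the final formula.
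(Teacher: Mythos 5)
The paper itself offers no proof of this proposition --- it is quoted from Mazur and Merel --- and the classical route in those sources is exactly the one you take: realize $2\pi i\,E_m(z)\,dz$ as $d\log$ of an eta-quotient and invoke the Dedekind--Rademacher transformation law. Your derivation of the explicit formula is essentially sound once the normalization is fixed: the correct modular unit is $\lambda_{E_m}(z)=\Delta(mz)/\Delta(z)=(\eta(mz)/\eta(z))^{24}$ (as the paper states for $m=N$), not $\eta(mz)^{2m}/\eta(z)^{2}$, whose logarithmic derivative has the wrong constant term. With exponent $24$, the two applications of Rademacher's law --- to $\gamma$ acting on $z$ and to $\left(\begin{smallmatrix} a & mb \\ c/m & d\end{smallmatrix}\right)$ acting on $mz$ --- make the $\log(cz+d)$ branches cancel and give $\pi_{E_m}(\gamma)=\Phi\bigl(\left(\begin{smallmatrix} a & mb \\ c/m & d\end{smallmatrix}\right)\bigr)-\Phi(\gamma)$ in terms of the integer-valued Rademacher $\Phi$-function, which is the displayed formula. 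Part (1) and the $c=0$ case are fine.

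The genuine gap is the containment $\mathrm{im}(\pi_{E_m})\subseteq\mu\Z$. You derive it from ``the classical fact that $12\,S(d,k)\in\Z$,'' which is false: for the standard Dedekind sum, $12\,S(1,3)=\tfrac{2}{3}$; only $6k\,S(d,k)\in\Z$ holds in general. Moreover, even if that term were an integer it would not follow that it is divisible by $\gcd(m-1,12)$, and the term $\tfrac{a+d}{c}(m-1)$ is not visibly in $(m-1)\Z$ since $\tfrac{a+d}{c}$ need not be an integer --- the two pieces are only jointly integral, so no ``common factor'' argument can be read off the formula termwise. The standard repair is structural rather than computational: by the Ligozat--Ogg criterion the root $(\Delta(mz)/\Delta(z))^{1/\mu}$ is already a single-valued modular function on $X_0(m)$, so $\pi_{E_m}=\mu\cdot\pi_g$ where $\pi_g$ is the integer-valued winding number of $\log$ of that root along $c(\gamma)$; this is where $\mu=\gcd(m-1,12)$ actually enters. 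You should either supply that argument or cite it; as written, the divisibility claim in part (2) is unsupported.
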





\section{Coset representatives and Cusps}
We have a canonical bijection $\Gamma_0(N)\backslash \mathrm{SL}_2(\Z) \cong
{\sP}^1(\Z/N\Z)$ given 
by $\left(\begin{array}{cc}
a & b\\
c & d\\
\end{array}\right) \rightarrow (c,d).$\\ 
Let $\varOmega=  \cup_{m \mid N} M_m,$ where for any $m \mid N$ with $1 <m < N,$
 $$M_m = \{ \beta^m_l =\left(\begin{smallmatrix}
-1 & -l\\
m & l m -1\\
\end{smallmatrix}\right);  0 \leq l \leq \frac{N}{m}-1\}, \
M_N = \{ \alpha_N = \left( \begin{smallmatrix} 1 & 0 \\ 0 & 1 \end{smallmatrix} \right) \}, \
M_1 = \{ \alpha_k = \left(\begin{smallmatrix}
0 & -1\\
1 & {k}\\
\end{smallmatrix}\right); 0 \leq k \leq N-1\}.$$
\begin{lemma}
\label{explicit}
The set  $\varOmega$ is a complete set of  coset representatives of $\Gamma_0(N)\backslash \mathrm{SL}_2(\Z) \cong {\sP}^1(\Z/N\Z) $. 
\end{lemma}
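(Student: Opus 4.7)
The plan is to exploit the canonical bijection $\Gamma_0(N) \backslash \SL_2(\Z) \cong \sP^1(\Z/N\Z)$ sending $\left(\begin{smallmatrix} a & b \\ c & d \end{smallmatrix}\right)$ to the class $(c : d) \bmod N$, stated at the opening of the section. Under this bijection the identity $\alpha_N$ projects to $(0 : 1)$, each $\alpha_k \in M_1$ projects to $(1 : k)$, and each $\beta^m_l \in M_m$ projects to $(m : lm-1)$. It then suffices to prove that $|\varOmega| = |\sP^1(\Z/N\Z)|$ and that all the projections are pairwise inequivalent in $\sP^1(\Z/N\Z)$; combining these two facts with the bijection above yields the lemma.

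For the count, $|M_N| = 1$, $|M_1| = N$, and $|M_m| = N/m$ for each proper nontrivial divisor $m$, so
\[
|\varOmega| \;=\; 1 + N + \sum_{\substack{m \mid N \\ 1 < m < N}} \frac{N}{m} \;=\; \sum_{d \mid N} d \;=\; \sigma(N).
\]
Since $N$ is squarefree, the classical index formula gives $[\SL_2(\Z) : \Gamma_0(N)] = \prod_{p \mid N}(p+1) = \sigma(N) = |\sP^1(\Z/N\Z)|$, matching the count. Thus only distinctness remains.

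For distinctness I would run a case analysis, using the fact that $(c_1 : d_1) = (c_2 : d_2)$ in $\sP^1(\Z/N\Z)$ precisely when $(c_2, d_2) \equiv u(c_1, d_1) \pmod{N}$ for some unit $u \in (\Z/N\Z)^*$. Two classes $(1 : k)$ and $(1 : k')$ force $u \equiv 1$ and hence $k \equiv k' \pmod{N}$. The class $(0 : 1)$ cannot coincide with any $(1 : k)$ or $(m : lm - 1)$, because the first coordinate would require $0$ to be sent to a nonzero residue by a unit. A collision between $(1 : k)$ and $(m : lm-1)$ with $1 < m < N$ would give $u \equiv m \pmod{N}$, contradicting the unit property since $\gcd(m, N) = m > 1$. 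For two proper divisors $m \neq m'$ strictly between $1$ and $N$, the relation $u m \equiv m' \pmod{N}$ with $u$ a unit forces $\gcd(m, N) = \gcd(m', N)$, hence $m = m'$. Finally, within a single $M_m$, the equation $(m : lm - 1) = (m : l'm - 1)$ demands $u \equiv 1 \pmod{N/m}$; writing $u = 1 + t(N/m)$ and plugging into the second-coordinate congruence modulo $N$ collapses to $l \equiv l' \pmod{N/m}$, forcing $l = l'$ in the prescribed range $0 \le l, l' \le N/m - 1$.

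The main obstacle is the last case, where squarefreeness is used essentially: the decomposition $N = m \cdot (N/m)$ with $\gcd(m, N/m) = 1$, valid precisely because $N$ is squarefree, is what allows one to reduce the second-coordinate congruence modulo $m$, deduce $t \equiv 0 \pmod{m}$, and conclude $l = l'$. Without this coprimality the same representative set would overcount and one would need a refined family parametrised by $\phi(\gcd(m, N/m))$ additional choices, which is why the argument is special to squarefree level.
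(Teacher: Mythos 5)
Your proof is correct and follows essentially the same strategy as the paper's: both reduce to the canonical bijection with $\sP^1(\Z/N\Z)$, verify the count $|\varOmega|=\sum_{m\mid N}N/m=|\sP^1(\Z/N\Z)|$, and establish pairwise distinctness using that divisors $m_1,m_2$ of $N$ in the same class must be equal and that $\gcd(m,N/m)=1$ (squarefreeness) forces $l=l'$. Your case analysis in projective coordinates is in fact slightly more complete than the paper's, which handles the cases involving $M_1\cup M_N$ only by a brief assertion.
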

\begin{proof}
Let $m_1,m_2$ be two divisors of $N$ such that $\beta^{m_1}_{l_1}(\beta^{m_2}_{l_2})^{-1} \in \Gamma_0(N)$
for some $0 \leq l_1 \leq \frac{N}{m_1}-1$ and $0 \leq l_2 \leq \frac{N}{m_2}-1$. This implies that 
$m_1m_2 (l_2-l_1) - m_1+m_2 \equiv 0 \pmod N.$ Since $m_1, m_2$ are divisors of $N$, the above expression implies that
$m_1=m_2$. Therefore the above expression reduces to
$m_1^2 (l_2-l_1) \equiv 0 \pmod N,$
which further implies that  	
$m_1(l_2-l_1) \equiv 0 \pmod {\frac{N}{m_1}}.$
Since $(m_1,\frac{N}{m_1})=1$, we get $l_1=l_2.$  Since $ab^{-1} \not \in \Gamma_0(N)$ for $a \in M_1 \cup M_N$ and $b \in M_m$ and $\# \varOmega = |{\sP}^1(\Z/N\Z)| =  \sum_{m \mid N} \frac{N}{m},$
the result follows.
\end{proof}
There are $2^{r}$ cusps of $X_0(N),$ where $r$ is the number of primes dividing
the level $N.$ They are explicitly given in the following lemma.
\begin{lemma}
\label{cuspspq}
The cusps $\partial(X_0(N))$ can be identified with the set $\{ 0, \infty\} \cup  
\{ \frac{1}{m}, m \mid N \}$. 
\end{lemma}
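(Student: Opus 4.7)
The plan is to realize $\partial(X_0(N))=\Gamma_0(N)\backslash\sP^1(\Q)=\Gamma_0(N)\backslash\SL_2(\Z)/\Gamma_\infty$, where $\Gamma_\infty=\langle\pm T\rangle$ is the stabilizer of $\infty$, and then use the coset representatives $\varOmega$ from Lemma~\ref{explicit} to enumerate the $\Gamma_\infty$-orbits. Because $-I\in\Gamma_0(N)$, the right action of $-I$ on $\Gamma_0(N)\backslash\SL_2(\Z)$ is trivial, so cusps correspond precisely to the right $T$-orbits on $\varOmega$.

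First I would compute the right $T$-action block by block. A direct matrix multiplication shows $\alpha_k T=\alpha_{k+1}$ and $\beta_l^m T=\beta_{l+1}^m$ in $\SL_2(\Z)$, while the wrap-around elements $\alpha_{N-1}T$ and $\beta_{N/m-1}^m T$ reduce modulo $\Gamma_0(N)$ to $\alpha_0$ and $\beta_0^m$ respectively: for instance, $\alpha_{N-1}T=\left(\begin{smallmatrix}0&-1\\1&N\end{smallmatrix}\right)$ differs from $\alpha_0$ by left-multiplication by $\left(\begin{smallmatrix}1&0\\-N&1\end{smallmatrix}\right)\in\Gamma_0(N)$, and an analogous element of $\Gamma_0(N)$ handles $\beta^m_{N/m}\sim\beta_0^m$. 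Hence each block $M_m$ is a single $\Gamma_\infty$-orbit, giving exactly $d(N)=2^r$ cusps, one for each divisor of $N$, matching the classical count at square-free level.

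To name each cusp I evaluate the representative at $\infty$: $\alpha_k\cdot\infty=0$ (block $M_1$), $\beta_l^m\cdot\infty=-\tfrac{1}{m}$ (block $M_m$ for $1<m<N$), and $\alpha_N\cdot\infty=\infty$ (block $M_N$). It remains to identify $-\tfrac{1}{m}$ and $\tfrac{1}{m}$ as the same cusp of $X_0(N)$; this is the only subtle point. It follows from the standard criterion that, for $N$ square-free, two reduced cusps $\tfrac{a}{c}$ and $\tfrac{a'}{c'}$ are $\Gamma_0(N)$-equivalent if and only if $\gcd(c,N)=\gcd(c',N)$; both $\pm\tfrac{1}{m}$ have $\gcd$ of denominator with $N$ equal to $m$, so they determine the same class.

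The main obstacle is precisely this sign normalization in the last step. I can discharge it either by invoking the square-free cusp criterion (whose proof is a strong approximation / CRT argument using $\gcd(m,N/m)=1$ at each divisor), or more concretely by exhibiting a $\gamma\in\Gamma_0(N)$ with $\gamma\cdot\tfrac{1}{m}=-\tfrac{1}{m}$, which amounts to solving the simultaneous congruences $a+d\equiv 0\pmod{m}$ and $ad-bcN=1$ with $\gcd(m,N/m)=1$; both strategies exploit exactly the square-free hypothesis on $N$.
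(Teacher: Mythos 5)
Your proof is correct, but it takes a genuinely different route from the paper's. The paper's proof only establishes that the listed classes $\Gamma_0(N)0$, $\Gamma_0(N)\infty$, $\Gamma_0(N)\tfrac{1}{m}$ are pairwise distinct, by quoting the congruence criterion for $\Gamma_0(N)$-equivalence of cusps from Diamond--Shurman; the completeness of the list is left to the count $2^r$ asserted (without proof) just before the lemma. You instead compute the double coset space $\Gamma_0(N)\backslash\SL_2(\Z)/\langle\pm T\rangle$ directly on the explicit representatives $\varOmega$ of Lemma~\ref{explicit}: the verifications $\alpha_kT=\alpha_{k+1}$, $\beta^m_lT=\beta^m_{l+1}$ and the wrap-around identities (e.g.\ $\beta^m_{N/m-1}T=\left(\begin{smallmatrix}1+N& N/m\\ -mN& 1-N\end{smallmatrix}\right)\beta^m_0$ with the left factor in $\Gamma_0(N)$) all check out, so each block $M_m$ is one $T$-orbit and you get both the count $2^r$ and a representative $\frac{\pm1}{m}$ for each class in one stroke. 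The price is the extra sign-normalization $-\tfrac{1}{m}\sim\tfrac{1}{m}$, which you correctly discharge via $\gcd(m,N/m)=1$ (the square-free hypothesis); this is the one place where your argument and the paper's overlap, since both ultimately invoke the same equivalence criterion. Net effect: your version is more self-contained and actually proves the cusp count that the paper takes for granted, while the paper's is shorter because it outsources everything to the cited criterion.
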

\begin{proof}
If $\frac{a}{c}$ and $\frac{a^{\prime}}{c^{\prime}}$ are in $\sP^1(\Q)$, then 
 $\Gamma_0(N) \frac{a}{c} = \Gamma_0(N) \frac{a^{\prime}}{c^{\prime}}$  
$\Longleftrightarrow$  $\left(\begin{array}{c} a y \\ c \\ \end{array}\right)  \equiv \left(\begin{array}{c} a^{\prime} +
yc^{\prime} \\ c^{\prime}x\\ \end{array}\right)$  $\pmod{N}$, for some $x$ and $y$ such that $\mathrm{gcd}(x,N)=1$
[\cite{MR2112196}, p. 99]. This implies that  
$\Gamma_0(N)0$, $\Gamma_0(N)\infty$ and $\Gamma_0(N)\frac{1}{m}, ( m \mid N )$  
are disjoint.
\end{proof}
We list the rational numbers $\alpha (0)$ and  $\alpha( \infty)$ with 
$\alpha \in \varOmega$ as equivalence classes of cusps as follows:
\[
\begin{tabular}{|l|l|l|}
\hline
$0$ & $\frac{1}{m}, \ m \mid N$ \\ \hline
$\frac{-l}{lm-1}$, $(lm-1,\frac{N}{m})=1$ &   $\frac{-1}{k}$, $(k,m) >1$ \\ \hline
$\frac{-t}{t\frac{N}{m}-1}$, $(t\frac{N}{m}-1,m)=1$ & $\frac{-a}{a\frac{N}{m}-1}$, $({a\frac{N}{m}-1},m) >1$ \\ \hline
\end{tabular}.
\]

Choose $a$ and $b$ be two unique integers such
that $a\frac{N}{m} + bm \equiv  1 \pmod{N}$ with $1 \leq a \leq (\frac{N}{m} - 1)$ and $1 \leq b \leq (m - 1).$  
Let $\tilde{\varOmega} =  \cup_{m \mid N} \tilde{M}_m,$ where for any $m \mid N$ with $1 <m < N,$
 $$\tilde{M}_m = \{ \tilde{\beta}^m_l = \left(\begin{smallmatrix}
-1 & -l_m + \delta_{l_m} \frac{N} {m}\\
(N + m) & \ -1+( N + m) ( -l_m + \delta_{l_{m}} \frac{N} {m})
\end{smallmatrix}\right);  0 \leq l_{m} \leq \frac{N}{m}-1\},$$
$$ \tilde{M}_N = \{
\tilde{\alpha}_{N}=\left(\begin{smallmatrix}
N  &  N-1\\ 
N+1 &   N\\
\end{smallmatrix}\right) \} \ \tilde{M}_1 = \{ \tilde{\alpha}_k =  \left(\begin{smallmatrix}
s_k N^2 & s_k N -1\\
s_k N+1 & s_k\\
\end{smallmatrix}\right); 0 \leq k \leq N-1\}.$$
%
\begin{lemma} \label{coset}
The set $\tilde{\varOmega}$
is a complete set of coset representatives of
 $\Gamma_0(N) \backslash  \mathrm{SL}_2(\Z) \cong {\sP}^1(\Z/N\Z)$ and
  $\Gamma \backslash \Gamma(2) \cong  \Gamma_0(N) \backslash  \mathrm{SL}_2(\Z).$
 \end{lemma}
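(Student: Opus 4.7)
The plan has two ingredients: the abstract isomorphism $\Gamma\backslash\Gamma(2)\cong\Gamma_0(N)\backslash\SL_2(\Z)$, and an explicit lift of the representatives from Lemma \ref{explicit} into $\Gamma(2)$, which is precisely $\tilde\varOmega$.

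For the first ingredient, the inclusion $\Gamma(2)\hookrightarrow\SL_2(\Z)$ induces a natural map on coset spaces which is injective since $\Gamma=\Gamma_0(N)\cap\Gamma(2)$. Surjectivity is equivalent to the decomposition $\SL_2(\Z)=\Gamma_0(N)\cdot\Gamma(2)$, which in turn is equivalent to the reduction $\Gamma_0(N)\to\SL_2(\Z/2\Z)$ being surjective. This last statement holds because $\gcd(N,2)=1$: both $T=\left(\begin{smallmatrix}1&1\\0&1\end{smallmatrix}\right)$ and $\left(\begin{smallmatrix}1&0\\N&1\end{smallmatrix}\right)$ lie in $\Gamma_0(N)$ (using that $N$ is odd) and reduce modulo $2$ to generators of $\SL_2(\Z/2\Z)\cong S_3$.

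For the second ingredient, I would verify three things for each $\tilde\alpha\in\tilde\varOmega$: (a) $\det\tilde\alpha=1$; (b) $\tilde\alpha\equiv I\pmod2$; and (c) the bottom row of $\tilde\alpha$ agrees modulo $N$ with the bottom row of the corresponding element of $\varOmega$. Claim (a) is a direct computation in each family, e.g.\ $\det\tilde\alpha_N=N^2-(N-1)(N+1)=1$ and $\det\tilde\alpha_k=s_k^2N^2-(s_kN-1)(s_kN+1)=1$. Claim (b) is a parity check using that $N$ is odd, every divisor $m\mid N$ is odd, and $s_k=k+(\delta_k-1)N$ is odd (if $k$ is even then $\delta_k=0$ and $k-N$ is odd; if $k$ is odd then $\delta_k=1$ and $s_k=k$); the only nontrivial case is the top-right entry of $\tilde\beta^m_l$, where the correction term $\delta_{l_m}\tfrac{N}{m}$ is added precisely so that $-l_m+\delta_{l_m}\tfrac{N}{m}$ is even for both parities of $l_m$. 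Claim (c) reduces to noting that $m\cdot\delta_{l_m}\tfrac{N}{m}=\delta_{l_m}N\equiv0\pmod N$, so the bottom row of $\tilde\beta^m_l$ reduces to $(m,-1-l_m m)$ modulo $N$, matching the bottom row of the appropriate $\beta^m_{l'}$ after the relabeling $l'\equiv-l_m\pmod{N/m}$; the analogous reductions for $\tilde\alpha_k$ and $\tilde\alpha_N$ handle the remaining families.

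Combining the two ingredients finishes the proof: $\tilde\varOmega$ is a set of $|\sP^1(\Z/N\Z)|$ elements of $\SL_2(\Z)\cap\Gamma(2)$ meeting each $\Gamma_0(N)$-coset in $\SL_2(\Z)$ exactly once, so it is simultaneously a complete set of representatives for $\Gamma_0(N)\backslash\SL_2(\Z)$ and for $\Gamma\backslash\Gamma(2)$. I expect the main obstacle to be the parity bookkeeping inside $\tilde\beta^m_l$: one must balance the three constraints (determinant one, $\Gamma(2)$-congruence, and fixed bottom row mod $N$) simultaneously, and it is exactly this balancing act that forces the introduction of the $\delta_{l_m}\tfrac{N}{m}$ correction; verifying its consistency across the two parities of $l_m$ is the substantive content of the lemma.
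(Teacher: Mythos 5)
Your strategy is sound and is essentially an expanded, more careful version of the paper's very terse argument: show $\tilde\varOmega\subseteq\Gamma(2)$, show its elements fall into pairwise distinct $\Gamma_0(N)$-cosets, and count. Your verifications for $\tilde\beta^m_l$ (determinant one, the parity role of the correction $\delta_{l_m}\tfrac{N}{m}$, and the relabeling $l'\equiv-l_m\pmod{N/m}$ of bottom rows) and for $\tilde\alpha_k$ (via $s_k\equiv k\pmod N$) are correct, and your reduction of the abstract bijection $\Gamma\backslash\Gamma(2)\cong\Gamma_0(N)\backslash\SL_2(\Z)$ to the surjectivity of $\Gamma_0(N)\to\SL_2(\Z/2\Z)$ is valid (though, as your own final paragraph shows, it already follows from the existence of the explicit system of representatives, so it is redundant).

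The gap sits exactly where you wave your hands: ``the analogous reductions for $\tilde\alpha_k$ and $\tilde\alpha_N$ handle the remaining families.'' For $\tilde\alpha_N=\left(\begin{smallmatrix}N&N-1\\N+1&N\end{smallmatrix}\right)$ the bottom row is $(N+1,N)\equiv(1,0)\pmod N$, which is the class of $\alpha_0=S$ in $\sP^1(\Z/N\Z)$, not the class $(0,1)$ of $\alpha_N=I$; so your claim (c) fails for this matrix. Worse, $\tilde\alpha_0$ also reduces to $(1,0)$, since $s_0=0+(\delta_0-1)N=-N\equiv0\pmod N$ gives bottom row $(s_0N+1,s_0)\equiv(1,0)$. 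Thus, with the matrices exactly as defined, $\tilde\varOmega$ represents the point $(1{:}0)$ twice and misses $(0{:}1)$ entirely (no element of $\tilde\varOmega$ lies in $\Gamma_0(N)$), so the injectivity-plus-counting argument cannot close. This is almost certainly a defect in the stated definition of $\tilde\alpha_N$ rather than in your method: replacing $\tilde\alpha_N$ by any element of $\Gamma(2)$ with bottom row $\equiv(0,\pm1)\pmod N$ (the identity matrix is the obvious choice) removes both the collision and the missing class, after which your argument goes through verbatim. But a complete proof must either carry out that correction explicitly or report that the verification fails; asserting the remaining cases ``by analogy'' is precisely where the error hides.
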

\begin{proof}
The orbits 
$\Gamma_0(N) A_1$, $\Gamma_0(N) A_2$ 
are not equal, for distinct $A_1,$ $A_2$ $\in$ $\tilde{\varOmega}.$ The lemma follows from\\$|\tilde{\varOmega}| = |\sP^1(\Z/N\Z)|= \sum_{m \mid N} m.$
The set $\tilde{\varOmega} \subseteq \Gamma(2)$ and $\Gamma_0(N) \alpha_i $ = $\Gamma_0(N) \tilde{\alpha}_i,$  $\Gamma_0(N) {\beta}^m_l$ = $\Gamma_0(N) \tilde{\beta}^m_l.$  
Hence there is a canonical bijection $\Gamma_0(N) \backslash  \mathrm{SL}_2(\Z) \cong \Gamma \backslash \Gamma(2).$
\end{proof}

\begin{lemma}\label{Pminus}

We can explicitly write the set $P_{-}$ is of the form $\Gamma \frac{x}{y}$ with $x$ and $y$ both odd.
We can also write $P_{-}=\{ \Gamma \frac{1}{m},  m \mid N\}.$ 
\end{lemma}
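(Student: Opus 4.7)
The plan is to combine the mod-$2$ parity description of the cusps of $X(2)$ with Lemma~\ref{cuspspq} and a small stabilizer computation. For the parity characterization, recall that the three cusps of $X(2)$ are distinguished by the parities of $(x,y)$ in any coprime representative $\tfrac{x}{y}\in\sP^1(\Q)$: direct computation using $\Gamma(2)\equiv I\pmod 2$ shows that $\Gamma(2)\cdot 0$, $\Gamma(2)\cdot 1$, and $\Gamma(2)\cdot\infty$ consist of fractions of types $\tfrac{\mathrm{even}}{\mathrm{odd}}$, $\tfrac{\mathrm{odd}}{\mathrm{odd}}$, and $\tfrac{\mathrm{odd}}{\mathrm{even}}$ respectively. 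Since $\pi_0$ is the natural projection, $P_{-}=\pi_0^{-1}(\Gamma(2)\cdot 1)$ is exactly the set of $\Gamma$-orbits of odd/odd cusps.

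For the identification $P_{-}=\{\Gamma\cdot\tfrac{1}{m}:m\mid N\}$, the inclusion $\supseteq$ is easy: since $N$ is odd and square-free, every divisor $m\mid N$ is odd, so $\tfrac{1}{m}$ is odd/odd; and these orbits are distinct for distinct $m$ because $\Gamma\subseteq\Gamma_0(N)$ together with Lemma~\ref{cuspspq} forces $\Gamma\cdot\tfrac{1}{m_1}=\Gamma\cdot\tfrac{1}{m_2}\Longrightarrow m_1=m_2$.

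For the reverse inclusion, given an odd/odd cusp $\tfrac{x}{y}$, Lemma~\ref{cuspspq} produces $m\mid N$ and $\gamma_0\in\Gamma_0(N)$ with $\gamma_0\cdot\tfrac{1}{m}=\tfrac{x}{y}$. Since $\tfrac{1}{m}$ and $\tfrac{x}{y}$ both reduce to $1\in\sP^1(\Z/2\Z)$, the mod-$2$ reduction $\overline{\gamma_0}$ fixes $1$, so $\overline{\gamma_0}\in\mathrm{Stab}_{\SL_2(\Z/2\Z)}(1)=\{I,S\}$ with $S=\bigl(\begin{smallmatrix}0&1\\1&0\end{smallmatrix}\bigr)$. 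If $\overline{\gamma_0}=I$ then $\gamma_0\in\Gamma$ already. Otherwise I would multiply on the right by the stabilizer element
\[
\sigma_m=\begin{pmatrix}1&0\\m&1\end{pmatrix}\begin{pmatrix}1&N/m\\0&1\end{pmatrix}\begin{pmatrix}1&0\\-m&1\end{pmatrix}=\begin{pmatrix}1-N&N/m\\-mN&1+N\end{pmatrix}\in\mathrm{Stab}_{\Gamma_0(N)}\!\Bigl(\tfrac{1}{m}\Bigr),
\]
which lies in $\Gamma_0(N)$ (lower-left entry divisible by $N$) and reduces mod $2$ to $S$ because $N$, $m$, and $N/m$ are all odd. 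Then $\gamma:=\gamma_0\sigma_m\in\Gamma_0(N)\cap\Gamma(2)=\Gamma$ and $\gamma\cdot\tfrac{1}{m}=\tfrac{x}{y}$, proving $\Gamma\cdot\tfrac{x}{y}=\Gamma\cdot\tfrac{1}{m}$.

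The only nontrivial step is the stabilizer calculation: one must verify that $\sigma_m$ has integer entries (using $m\mid N$), lies in $\Gamma_0(N)$, and has the right mod-$2$ reduction. Both the square-freeness of $N$ (needed so that $N/m\in\Z$ and $\gcd(m,N/m)=1$, which is why $\sigma_m$ generates the parabolic part of the stabilizer in $\Gamma_0(N)$) and the oddness of $N$ (needed for the mod-$2$ reduction to come out to $S$) are essential here.
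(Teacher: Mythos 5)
Your proof is correct, but it takes a genuinely different route from the paper's. The paper works directly at the level of $\Gamma$: it writes a general element of $P_{-}$ as $\Gamma\frac{u}{v\delta}$ with $\delta\mid N$ and explicitly manufactures a matrix $A\in\Gamma$ with $A(\frac{u}{v\delta})=\frac{1}{\delta}$ as a product of three matrices, the membership $A\in\Gamma$ coming down to a congruence in a free parameter $c$ that is solvable because $\gcd(v\delta,\frac{N}{\delta})=1$. You instead split the problem across the two group levels: first the mod-$2$ parity description of the three cusps of $X(2)$ identifies $P_{-}$ with the odd/odd cusps, then Lemma~\ref{cuspspq} supplies a $\Gamma_0(N)$-equivalence $\gamma_0\cdot\frac{1}{m}=\frac{x}{y}$, and finally you correct $\gamma_0$ to an element of $\Gamma=\Gamma_0(N)\cap\Gamma(2)$ by right-multiplying, if necessary, by the explicit parabolic $\sigma_m\in\mathrm{Stab}_{\Gamma_0(N)}(\frac{1}{m})$ whose mod-$2$ reduction is the nontrivial element of $\mathrm{Stab}_{\SL_2(\Z/2\Z)}([1:1])$. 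I checked the computation: $\sigma_m=\bigl(\begin{smallmatrix}1-N&N/m\\-mN&1+N\end{smallmatrix}\bigr)$ is integral, unimodular, lies in $\Gamma_0(N)$, fixes $\frac{1}{m}$, and reduces to $\bigl(\begin{smallmatrix}0&1\\1&0\end{smallmatrix}\bigr)$ precisely because $N$, $m$, $N/m$ are all odd; and $\overline{\gamma_0}$ does land in the order-$2$ stabilizer of $[1:1]$ by equivariance of reduction mod $2$. Your approach buys a cleaner conceptual structure — it reuses the already-proved cusp classification for $\Gamma_0(N)$ instead of redoing the matrix gymnastics at level $\Gamma$, and it makes transparent exactly where oddness of $N$ enters — while the paper's construction is self-contained and hands you the conjugating matrix in one step. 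One minor quibble: your closing remark that square-freeness is ``essential'' for the stabilizer computation overstates the point, since $\sigma_m$ is integral and lies in $\Gamma_0(N)$ for any $m\mid N$; square-freeness is really consumed earlier, in Lemma~\ref{cuspspq}'s description of the cusps. This does not affect the validity of the argument.
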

\begin{proof} We can write any element $a$ in
$P_{-}=\pi_0^{-1}(\Gamma(2)1)$ as $\Gamma \theta1$ for some 
$\theta \in  \tilde{\varOmega}.$ (Lemma~\ref{coset}). Let $\delta \mid N$, 
then $a = \Gamma \frac{u}{v\delta}$ with 
$\gcd(u, v\delta)=1$ and $\gcd(v\delta, \frac{N}{\delta})=1$.
Choose  an odd integer $m$ and an even integer $l$  such that $lu-mv\delta=1$. 
We have
 $\left(\begin{smallmatrix}
1 & 0 \\
\delta-1 & 1\\
\end{smallmatrix}\right)\left(\begin{smallmatrix}
1+c & -c \\
c& 1-c\\
\end{smallmatrix}\right)1=\frac{1}{\delta}$ and $\left(\begin{smallmatrix}
-m & u+m\\
-l & l+v\delta\\
\end{smallmatrix}\right)1=\frac{u}{v\delta}.$ Hence 
 $A(\frac{u}{v\delta})=\frac{1}{\delta},$ where $A=\left(\begin{smallmatrix}
1 & 0 \\
\delta-1 & 1\\
\end{smallmatrix}\right)\left(\begin{smallmatrix}
1+c & -c \\
c& 1-c\\
\end{smallmatrix}\right)\left(\begin{smallmatrix}
 l+v\delta & -m-u\\
l & -m\\
\end{smallmatrix}\right).$ This matrix
$A \in \Gamma$ if and only if $c v\delta \equiv l' \pmod {\frac{N}{\delta}}$. Since 
$\gcd(v \delta, \frac{N}{\delta}) =1,$ there is always such $c$.
Hence, the set $P_{-}$ consists of $2^r$ elements $\{ \Gamma \frac{1}{m},  m \mid N\}.$ 
\end{proof}
\section{Even Eisenstein elements}
\label{Even}
We construct differential forms of first kind on the ambient 
Riemann surface $X_{\Gamma}-P_{+}$ by using the following lemma. 
\begin{lemma}
\label{no-zero}
 Let $f: X_0(N) \rightarrow \C$ be a rational function. The divisors of $\kappa(f)$ 
are supported on  $P_{+}$.
\end{lemma}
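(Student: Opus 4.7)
The plan is to show that $\mathrm{ord}_P(\kappa(f)) = 0$ for every $P \in P_{-}$ and every rational $f$ on $X_0(N)$. Since $P_{-}$ consists entirely of cusps of $X_\Gamma$, and any zero or pole of $f$ at a non-cusp point of $X_0(N)$ pulls back via $\pi$ or $\pi'$ to non-cusp points of $X_\Gamma$ (which lie outside $P_{-}$), the local behaviour of $\kappa(f) = (f \circ \pi)^2/(f \circ \pi')$ at a point of $P_{-}$ depends only on the cusp part of $\mathrm{div}(f)$. I may therefore assume from the start that $\mathrm{div}(f)$ is supported on $\partial(X_0(N))$.

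Next I would use Lemma~\ref{Pminus}, which parametrizes $P_{-} = \{\Gamma \cdot (1/m) : m \mid N\}$ and gives $|P_{-}| = 2^r$. Fix $P_m := \Gamma \cdot (1/m)$ for some divisor $m$ of $N$. Using the explicit coset representatives $\tilde\alpha_k, \tilde\beta^m_l, \tilde\alpha_N$ of Lemma~\ref{coset}, I would determine the target cusps $\pi(P_m)$ and $\pi'(P_m)$ in $\partial(X_0(N))$, together with the ramification indices $e^\pi_{P_m}$ and $e^{\pi'}_{P_m}$ of the two covering maps at $P_m$. Writing $\mathrm{ord}_{P_m}(f \circ \pi) = e^\pi_{P_m}\,\mathrm{ord}_{\pi(P_m)}(f)$ and similarly for $\pi'$, the identity to be established becomes
\[
\mathrm{ord}_{P_m}(\kappa(f)) \;=\; 2\, e^\pi_{P_m}\,\mathrm{ord}_{\pi(P_m)}(f) \;-\; e^{\pi'}_{P_m}\,\mathrm{ord}_{\pi'(P_m)}(f) \;=\; 0.
\]
Since this must hold for every cusp divisor of $X_0(N)$, universality in $f$ forces $\pi(P_m) = \pi'(P_m)$ and $e^{\pi'}_{P_m} = 2\, e^\pi_{P_m}$. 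This is precisely the designed-in feature of the operator $\kappa$: the squaring in the numerator of $(f \circ \pi)^2/(f \circ \pi')$ exactly compensates the doubled ramification of $\pi'$ along $P_{-}$.

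The main obstacle is the explicit local analysis of $\pi$ and $\pi'$ at each cusp in $P_{-}$. For this I would work with local uniformizers around each $P_m$, expressed in terms of the coset representative supplied by Lemma~\ref{coset}, and verify on the nose that the two covering maps hit the same cusp of $X_0(N)$ while a uniformizer for $\pi'$ equals, up to a unit, the square of a uniformizer for $\pi$. This parallels the analyses in \cite{MR1405312} for prime level and \cite{MR3463038} for $N = pq$; the additional bookkeeping here concerns the intermediate divisors $1 < m < N$ which give rise to the $2^r$ cusps $\Gamma \cdot (1/m)$ of $X_\Gamma$ that sit above $\Gamma(2)\cdot 1$ for $N$ odd square-free.
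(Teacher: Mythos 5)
Your proposal is correct and follows essentially the same route as the paper: the paper's proof writes down explicit local coordinates $q_1$, $q_{\frac{1}{N}}$, $q_{\frac{1}{m}}$ at the points of $P_{-}$ and checks that $\pi$ and $\pi'$ send each such point to the same cusp of $X_0(N)$, with $\pi$ locally $z \mapsto z^2$ and $\pi'$ locally $z \mapsto z^4$, so the orders cancel in $\frac{(f \circ \pi)^2}{f \circ \pi'}$ --- exactly the two facts ($\pi(P) = \pi'(P)$ and $e^{\pi'}_P = 2e^{\pi}_P$) you propose to verify with uniformizers. The only caution is that these facts must be established directly from the local coordinates (as the paper does), not inferred from ``universality in $f$,'' which would be arguing from the conclusion.
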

\begin{proof}
Let $f$ be a meromorphic function on  $X_0(N)$. Then $ f = \frac{g}{h}$ with $g$ and $h$ holomorphic function on $X_0(N)$. 
Every element of $P_{-}$ is of the form $\Gamma \frac{1}{m}$ with $m \mid N$ [Lemma \ref{Pminus}]. 
Every holomorphic map on Riemann surface locally looks like 
$z \rightarrow z^n$ [p.\,44, \cite{MR1326604}].

Consider the morphism   $\pi'$ and the point $\Gamma\frac{1}{m}$  with $m \mid N.$  
The local coordinates at the points 
$\Gamma_0(N)0,\Gamma_0(N)\infty$ and $\Gamma_0(N)\frac{1}{m}, \ 1 < m < N, m \mid N$ 
are given by $q_0(z)=e^{2\pi i \frac{1}{-Nz}}, q_\infty(z)=e^{2\pi i z}$ and $q_\frac{1}{m}(z)= e^{2\pi i \frac{z}{\frac{N}{m}(-mz+1)}}$
respectively. For $X_\Gamma$, the local coordinates around the points of $P_{-}$ are given by
$q_1(z)=e^{2\pi i \frac{1}{2N(-z+1)}}$, $q_{\frac{1}{N}}(z)=e^{2\pi i \frac{z}{2(-Nz+1)}}$, $q_\frac{1}{m}(z)=e^{2\pi i \frac{z}{2\frac{N}{m}(-mz+1)}}, \ 1< m < N, \ m \mid N.$
Around the point $\Gamma 1$ and $\Gamma \frac{1}{N}$, we have
$q_0 \circ \pi=q_1^2, \ q_0 \circ \pi'=q_1^4,$
 $q_{\frac{1}{N}} \circ \pi=q_{\frac{1}{N}}^2,$ and $q_ {\frac{1}{N}}\circ \pi'=q_{\frac{1}{N}}^4.$
Let $y=\frac{1}{m}$ with $1 < m < N, m \mid N.$ 

Consider the map $\pi$ and $t=\left(\begin{smallmatrix}
1& 0 \\
-m & 1\\
\end{smallmatrix}\right)$ is a matrix such that $t(y)=i \infty$ and $e=\frac{N}{m}$. 
The local coordinate around the point $\Gamma \frac{1}{m}$ is $z \rightarrow e^{2 \pi i \frac{t(z)}{2e}}$ and the map $\pi$ takes it to $e^{2 \pi i \frac{t(z)}{e}}$. In this coordinate chart, the map $\pi$ is given by $z \rightarrow z^2$ and  $\pi'$ is given by $z \rightarrow z^4.$ Thus the function $\frac{(f \circ \pi)^2}{f \circ \pi'}$ has no zero or pole on $P_{-}$.
\end{proof}
For $E_m \in \tE_{N}$, define a function $F_{m} : \sP^1(\Z/N\Z) \rightarrow \Z$ by
\[
F_{m}(g) =\varphi_{E_m}(\xi_0(g)) =\int_{g(1)}^{g(-1)}[2E_m(z) - E_m(\frac{z+1}{2})]dz.
\]
For any $\gamma=\left(\begin{smallmatrix}
a &  b \\
c & d\\
\end{smallmatrix}\right) \in \Gamma(2), \ h\gamma h^{-1}=\left(\begin{smallmatrix}
a+c &  \frac{b+d-a-c}{2} \\
2c & d-c\\
\end{smallmatrix}\right) \in \SL_2(\Z),$ where $h=\left(\begin{smallmatrix}
1 & 1\\
0 & 2\\
\end{smallmatrix}\right)$
and
for any matrix For $\gamma=\left(\begin{smallmatrix}
a & b \\
c & d\\
\end{smallmatrix}\right) \in \Gamma,$ with $c \neq 0,$ the rational number $P_m(\gamma)=\frac{2 \pi_{E_m}(\gamma)-\pi_{E_m}(h\gamma h^{-1})}{12}$ is given by \[
P_m(\gamma)=sgn(t(\gamma))[2 (S(s(\gamma),|t(\gamma)|N)-S(s(\gamma), |t(\gamma)|))
-S(s(\gamma),|\frac{t(\gamma)}{2}|N)+S(s(\gamma), \frac{|t(\gamma)|}{2})],
\]
where $t(\gamma)=b+d-a-c$ and $s(\gamma)=a+c.$ In particular, $P_m(\gamma) \in \Z$  [ \cite{MR3463038}, Remark $27,$ Lemma $28$ ].

\begin{prop}
\label{exceptional}
\begin{equation*}
F_{m}(g) =
\begin{cases}
12(S(r,m)-2S(r,2m)) & \text{if $g=(r-1,r+1)$,} \\
6(P_m(\gamma_1^{x,k})-P_m(\gamma^{x,k}_2)) &  \text{if $g=(1+kx,1)$ or $g=(-1-kx,1)$,} \\
-6(P_m(\gamma_1^{x,k})-P_m(\gamma^{x,k}_2)) &  \text{if $g=(1,-1-kx)$ or $g=(1,1+kx)$,} \\
0 &  \text{if $g=(\pm 1,1)$.} 
\end{cases}
\end{equation*}
\end{prop}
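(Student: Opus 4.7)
The plan is to evaluate
\[
F_m(g) = \int_{g(1)}^{g(-1)} \Big[\, 2 E_m(z) - E_m\Big(\tfrac{z+1}{2}\Big) \Big]\, dz
\]
for each type of coset representative of $\Gamma \backslash \Gamma(2) \cong \sP^1(\Z/N\Z)$, by reducing each integral to periods $\pi_{E_m}(\cdot)$ computable via Proposition~\ref{value-pi}. With $h = \left(\begin{smallmatrix} 1 & 1 \\ 0 & 2 \end{smallmatrix}\right)$ (so $h\cdot 1 = 1$, $h\cdot(-1) = 0$), the substitution $u = (z+1)/2$ combined with $hg = (hgh^{-1})h$ rewrites the integral as
\[
F_m(g) \;=\; 2\int_{g(1)}^{g(-1)} E_m\, dz \;-\; 2\int_{(hgh^{-1})(1)}^{(hgh^{-1})(0)} E_m\, dz.
\]
For each coset I then need to find elements of $\Gamma$ carrying the endpoints to the cusps $0$ and $\infty$, so that each summand becomes a period.

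For $g = (\pm 1, 1)$: explicit computation with the representative of this coset drawn from $\tilde\varOmega$ (Lemma~\ref{coset}) shows the two summands on the right coincide, giving $F_m(g) = 0$. For $g = (r-1, r+1)$: the argument of \cite{MR3463038} applies essentially verbatim -- one finds a single $\gamma_r \in \Gamma$ with $\gamma_r\cdot \infty = g(1)$, $\gamma_r\cdot 0 = g(-1)$, and Proposition~\ref{value-pi} applied to $\gamma_r$ and to its conjugate $h\gamma_r h^{-1}$ yields the Dedekind-sum expression $12(S(r,m)-2S(r,2m))$.

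For the remaining cases $g = (\pm(1+kx), 1)$ and $g = (1, \pm(1+kx))$, which by the observation preceding the theorem are precisely those cosets \emph{not} of the form $(r-1, r+1)$, no single element of $\Gamma$ links $g(1)$ to $g(-1)$ via one period. My strategy is to split the path through $\infty$,
\[
\{g(1), g(-1)\} \;=\; \{g(1), \infty\} + \{\infty, g(-1)\},
\]
so that each piece becomes a period of a matrix in $\Gamma$. The explicit elements $\gamma_1^{x,k}, \gamma_2^{x,k}$ realize these two periods: the congruences $l(s_k x + 2) - 2sN = 1$ and $l' s_k x - 2 s'(N/x) = 1$ are exactly the conditions forcing $\gamma_y^{x,k} \in \SL_2(\Z) \cap \Gamma_0(N) \cap \Gamma(2)$. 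Substituting into the identity $12\, P_m(\gamma) = 2\pi_{E_m}(\gamma) - \pi_{E_m}(h\gamma h^{-1})$ produces $F_m(g) = 6[P_m(\gamma_1^{x,k}) - P_m(\gamma_2^{x,k})]$. The sign flip between $(\pm(1+kx), 1)$ and $(1, \pm(1+kx))$ reflects reversing the path orientation, induced by left multiplication of the representative by $S$.

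The main obstacle is this last case: one must verify (a) that $\gamma_1^{x,k}$ and $\gamma_2^{x,k}$ actually lie in $\Gamma$ and have the divisibility of entries required for Proposition~\ref{value-pi}, and (b) that they send $\infty$ to the correct cusps $g(\pm 1)$ modulo $\Gamma$ so the decomposition above is valid. Both reduce to explicit congruence checks using the defining relations for $s, s', l, l'$, together with $\gcd(x, N/x) = 1$ -- which is where the square-freeness of $N$ enters crucially.
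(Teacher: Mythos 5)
Your treatment of the cases $g=(\pm1,1)$ and $g=(r-1,r+1)$ matches the paper's, but your strategy for the exceptional cosets $g=(\pm(1+kx),1)$ has a genuine gap: the decomposition $\{g(1),g(-1)\}=\{g(1),\infty\}+\{\infty,g(-1)\}$ cannot work. First, $\infty$ lies in $P_{+}$, where the differential $\kappa^*(\omega_{E_m})=[2E_m(z)-\tfrac12 E_m(\tfrac{z+1}{2})]dz$ still has poles with non-zero residue (the whole construction $\lambda_{E_m,2}=(\lambda_{E_m}\circ\pi)^2/(\lambda_{E_m}\circ\pi')$ only removes the singularities on $P_{-}$, cf.\ Lemma~\ref{no-zero}), so both pieces of your split are divergent integrals. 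Second, even formally, neither piece is a period of an element of $\Gamma$: the endpoints $g(\pm1)$ are cusps of the form $\frac{1}{s_kx+2}$ and $\frac{1}{s_kx}$, which are not $\Gamma$-equivalent to $\infty$ (indeed $\frac{1}{s_kx}\in P_{-}$ while $\infty\in P_{+}$), so there is no $\gamma\in\Gamma$ carrying one endpoint to the other and Proposition~\ref{value-pi} does not apply to the pieces. Your step (b), asking that $\gamma_1^{x,k},\gamma_2^{x,k}$ send $\infty$ to $g(\pm1)$ modulo $\Gamma$, is therefore unverifiable: that is not what these matrices do.

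The paper's route is different in two essential ways that you are missing. It first uses the fact that the Fourier coefficients of $E_m$ are real to get the symmetry $F_m(s_kx+1,1)=F_m(-s_kx-1,1)$, and then doubles the path to $\{\frac{1}{s_kx+2},-\frac{1}{s_kx+2}\}$, routed through the intermediate cusp $\frac{1}{s_kx}\in P_{-}$ where $\kappa^*(\omega_{E_m})$ is regular. This yields
\[
\int_{\frac{1}{s_kx+2}}^{-\frac{1}{s_kx+2}}\kappa^*(\omega_{E_m})
=2F_m(s_kx+1,1)+\int_{\frac{1}{s_kx}}^{-\frac{1}{s_kx}}\kappa^*(\omega_{E_m}),
\]
and the two outer integrals \emph{are} genuine periods because $\gamma_1^{x,k}(\frac{1}{s_kx+2})=-\frac{1}{s_kx+2}$ and $\gamma_2^{x,k}(\frac{1}{s_kx})=-\frac{1}{s_kx}$ with $\gamma_y^{x,k}\in\Gamma$; each equals $2\pi_{E_m}(\gamma_y^{x,k})-\pi_{E_m}(h\gamma_y^{x,k}h^{-1})=12P_m(\gamma_y^{x,k})$, giving $F_m=6(P_m(\gamma_1^{x,k})-P_m(\gamma_2^{x,k}))$. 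Without the evenness trick and the routing through $P_{-}$, your argument cannot produce the factor structure $P_m(\gamma_1^{x,k})-P_m(\gamma_2^{x,k})$, so the core of the exceptional case remains unproved.
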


\begin{proof}
If $g = (r - 1, r + 1)$ and $E_m \in \tE_{N}$, we get [\cite{MR1405312}, p.\,18]
\[
F_{m}(g) = \varphi_{E_m}(\xi_0(g)) = 12(S(r, m ) - 2S(r, 2m )).
\]
The differential form $k^*(\omega_{E_m})$ is of first kind on the Riemann surface $X_{\Gamma}-P_{+}$.
Since all the Fourier coefficients of the Eisenstein series are real valued, so an argument similar to [\cite{MR1405312}, p.\,19] shows that 
$F_{m}(s_kx+1,1)=F_{m}(-s_kx-1,1)$. The path 
$
\{\frac{1}{s_kx+2},-\frac{1}{s_kx+2}\}=\{\frac{1}{s_kx+2},\frac{1}{s_kx}\}
+\{\frac{1}{s_kx},\frac{-1}{s_kx}\}
+\{\frac{-1}{s_kx},\frac{-1}{s_kx+2}\}.
$
The rational number $\frac{1}{s_kx}$ correspond to a point of $P_{-}$ in the Riemann surface $X_{\Gamma}$. 
The differential form $k^*{\omega_{E_m}}$ has no zeros and poles on $P_{-}$. 
We conclude that 
$$
\int_{\frac{1}{s_kx+2}}^{-\frac{1}{s_kx+2}} k^*(\omega_{E_m})= \int_{\frac{1}{s_kx+2}}^{\frac{1}{s_kx}} k^*(\omega_{E_m})
+\int_{\frac{1}{s_kx}}^{\frac{-1}{s_kx}} k^*(\omega_{E_m})
+\int_{\frac{-1}{s_kx}}^{\frac{-1}{s_kx+2}} k^*(\omega_{E_m})=2 F_m(s_kx+1,1)+\int_{\frac{1}{s_kx}}^{\frac{-1}{s_kx}} k^*(\omega_{E_m}). 
$$
Let $\gamma_1^{x,k}$ and $\gamma^{x,k}_2$ be two matrices in  $\Gamma$ such that  $\gamma_1^{x,k}(\frac{1}{s_kx+2})=-\frac{1}{s_kx+2}$ and $\gamma^{x,k}_2(\frac{1}{s_kx})=-\frac{1}{s_kx}$. 
Hence $2 F_m(s_kx+1,1)=\int_{\frac{1}{s_kx+2}}^{\gamma_1^{x,k}(\frac{1}{s_kx+2})} k^*(\omega_{E_N})
-\int_{\frac{1}{s_kx}}^{\gamma^{x,k}_2(\frac{1}{s_kx})} k^*(\omega_{E_m}).$ 

We now prove  that the $\int_{\frac{1}{s_kx}}^{\gamma^{x,k}_2(\frac{1}{s_kx})} k^*(\omega_{E_m})$ 
is independent of the choice of the matrices $\gamma^{x,k}_2 \in \Gamma$
that take $\frac{1}{s_kx}$ to $-\frac{1}{s_kx}$.  For, $\gamma^{x,k}_2$ and 
${\gamma'}^{x,k}_2$ are two matrices such that  $\gamma^{x,k}_2(\frac{1}{s_kx})=
{\gamma'}^{x,k}_2(\frac{1}{s_kx})=-\frac{1}{s_kx}$.  Since  $\gamma^{x,k}_2 \in \Gamma$, the integral $\varphi_{E_m}(\gamma^{x,k}_2)=\int_{\frac{1}{s_kx}}^{\gamma^{x,k}_2(\frac{1}{s_kx})} k^*(\omega_{E_m})$ is independent of the choice of any point in $\tH \cup \{-1\}$,  hence by 
replacing  $\frac{1}{s_kx}$ with $(\gamma^{x,k}_2)^{-1} ({\gamma'}^{x,k}_2)\frac{1}{s_kx}$, 
we get the above integral is same as $\int_{\frac{1}{s_kx}}^{{\gamma'}^{x,k}_2(\frac{1}{s_kx})} k^*(\omega_{E_m})$ and the integral is independent of the choice of the matrices. 
Similarly, we can prove that $\int_{\frac{1}{s_kx+2}}^{\gamma^{x,k}(\frac{1}{s_kx+2})}k^*(\omega_{E_m})$ 
is also independent of the choice of the matrices that take $\frac{1}{s_kx+2}$ 
to $-\frac{1}{s_kx+2}.$
The above calculation shows that
\[
2 \pi_{E_m}(\gamma_1^{x,k})-\pi_{E_m}(h \gamma_1^{x,k} h^{-1})=2 F_m(s_kx+1,1)+2 \pi_{E_m}(\gamma^{x,k}_2)
-\pi_{E_m}(h \gamma^{x,k}_2h^{-1}).
\]
Hence, we get $$F_{m}(s_kx+1,1)=\frac{2 \pi_{E_m}(\gamma_1^{x,k})-\pi_{E_m}(h \gamma_1^{x,k} h^{-1})-2 \pi_E(\gamma^{x,k}_2)+\pi_E(h \gamma^{x,k}_2 h^{-1})}{2}=6(P_m(\gamma^{x,k})-P_m(\gamma^{x,k}_2)).$$
We also have $F_{m}((1+s_kx,1))=-F_{m}((1,-1-s_kx))$, the proposition follows.
\end{proof}

\begin{thm}
\label{psi-values}
 For $E_m \in E_2(\Gamma_0(N))$,  the even Eisenstein elements $\sE_{E_m}^0$ of $ \HH_1(X_{\Gamma}-P_{-} , P_{+},\Z)$ is given by 
$\sE_{E_m}^0=\sum_{g \in \sP^1(\Z/N\Z)} F_{m}(g) \xi^0(g).$ \end{thm}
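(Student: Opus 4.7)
My plan is to reduce the identity to a pairing computation using the non-degenerate intersection pairing and the duality between the bases produced by $\xi_0$ and $\xi^0$. Concretely, I would first note that by Theorem~\ref{int-iso}, $\xi^0$ is a group isomorphism, so $\sum_{g \in \sP^1(\Z/N\Z)} F_m(g)\,\xi^0(g)$ is an honest element of $\HH_1(X_\Gamma - P_-, P_+, \Z)$. Since the intersection pairing is non-degenerate and $\{[h]_0 = \xi_0(h) : h \in \Gamma\backslash \Gamma(2)\}$ is a $\Z$-basis of $\HH_1(X_\Gamma - P_+, P_-, \Z)$ (again by Theorem~\ref{int-iso}), it suffices to show that $\sE_{E_m}^0$ and $\sum_g F_m(g)\,\xi^0(g)$ have the same intersection with every $[h]_0$.

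The computation of the two sides is essentially immediate. The defining property of the even Eisenstein element (Definition~\ref{Eisensteinsec}) gives
\[
\sE^0_{E_m} \circ \xi_0(h) \;=\; \varphi_{E_m}(\xi_0(h)),
\]
and the function $F_m$ was constructed precisely as $F_m(h) = \varphi_{E_m}(\xi_0(h))$. On the other hand, using the orthogonality statement of Theorem~\ref{int-value}, $[g]^0 \circ [h]_0 = 1$ when $\Gamma g = \Gamma h$ and $0$ otherwise, so
\[
\Bigl(\sum_{g} F_m(g)\,\xi^0(g)\Bigr) \circ \xi_0(h) \;=\; \sum_{g} F_m(g)\,([g]^0 \circ [h]_0) \;=\; F_m(h).
\]
Both intersection numbers equal $F_m(h)$, so non-degeneracy of $\circ$ forces the equality of the two modular symbols.

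In this sense, the theorem is a purely formal consequence of the results already assembled: the isomorphisms $\xi_0$, $\xi^0$ and the perfectness of the intersection pairing do all the work. The genuine difficulty — explicitly identifying the function $F_m$ in the piecewise form stated in the theorem — is already packaged into Proposition~\ref{exceptional}, whose proof handles the generic coset class $(r-1,r+1)$ via Dedekind sums and the exceptional classes $(\pm 1 \pm kx, 1)$ via the auxiliary matrices $\gamma_1^{x,k}$ and $\gamma_2^{x,k}$. Once that proposition is in hand, the theorem is simply a duality statement and requires no further analytic input.
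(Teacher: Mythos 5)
Your argument is correct and is essentially identical to the paper's own proof: the paper expands $\sE_{E_m}^0 = \sum_g H_{E_m}(g)\,\xi^0(g)$ via the isomorphism $\xi^0$ and extracts each coefficient by pairing with $\xi_0(g)$ using Theorem~\ref{int-value} together with the defining property $\sE^0_{E_m}\circ c = \varphi_{E_m}(c)$, which is exactly your duality computation phrased as a coefficient extraction rather than a non-degeneracy argument. You also correctly locate the real content in the definition $F_m(g)=\varphi_{E_m}(\xi_0(g))$ and in Proposition~\ref{exceptional}.
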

\begin{proof}
Let the even Eisenstein element be $ \sE_{E_m}^0 = \sum_{g \in \sP^1(\Z/N\Z)} H_{E_m}(g) \xi^0(g)$
for some $H_{E_m}(g).$ Since by thoerem \ref{int-value},
$H_{E_m}(g)=\sum_{g \in \sP^1(\Z/N\Z)} H_{E_m}(g) \xi^0(g) \circ \xi_0(g)= \sE^0_{E_m} \circ \xi_0(g)= \varphi_{E_m}(\xi_0(g)) = F_{m}(g)$.
\end{proof}
\section{Boundary of the Eisenstein series for \texorpdfstring{$\Gamma_0(N)$}{X}}
\label{Eisensteinseries}
Let $\sigma_1(n)$ denote 
the sum of the positive divisors of $n$. Let $E'_2(z)=1-24(\sum_n \sigma_1(n) e^{2 \pi i n z})$
and $\Delta$ be the Ramanujan's cusp form of weight $12$.
For all $N \in \N$, the function 
$z \rightarrow \frac{\Delta(Nz)}{\Delta(z)}$ is a function on $\tH$ invariant under $\Gamma_0(N)$.  
The logarithmic differential of this function is 
$2 \pi i E_N(z) dz$ and  $E_N$ is a modular form of weight  two for 
$\Gamma_0(N)$ with constant 
term $N-1$. The differential form $E_N(z) dz$ is a differential form of third kind on $X_0(N)$. The 
periods [\S\,\ref{Peiods}] of these differential forms are in $\Z.$ 
By  [\citep{MR2112196}, Thm.\, 4.6.2], the set $\tE_{N}=\{E_m, m > 1, m \mid N \}$ 
is a basis of $E_2(\Gamma_0(N)).$ 
Let $\mathrm{Div}^{0}(X_0(N), \partial(X_0(N)),\Z)$ be the group of degree zero divisors supported on cusps.
For all cusps $y$, let $e_{\Gamma_0(N)}(y)$ denote the ramification index of $y$ over $\SL_2(\Z) \backslash \tH \cup \sP^1(\Q)$ and 
 $r_{\Gamma_0(N)}(y)=e_{\Gamma_0(N)} (y)a_0(E[y]).$
By [\cite{MR670070}, p.\, 23], there is a canonical isomorphism $\delta : E_2(\Gamma_0(N)) \rightarrow \mathrm{Div}^0(X_0 (N), \partial(X_0(N)), \Z)$ given by \begin{equation}
\label{divisor}
\delta(E)=\sum_{y \in \Gamma_0(N) \backslash \sP^1(\Q)} r_{\Gamma_0(N)} (y)[y].
\end{equation} By \cite{MR800251}, we see that $e_{\Gamma_0(N)}(y) =
\begin{cases}
\frac{N}{m}& \text{if $x=\frac{1}{m}$} \\
$1$ & \text{if $y= \infty$}\\
$N$ & \text{if $y = 0.$} 
\end{cases} 
$
\begin{equation*}
\mathrm{Since} \ \displaystyle \sum_{x \in \partial(X_0(N))}e_{\Gamma_0(N)}(x)a_0(E[x])=0, \ \mathrm{we} \ \mathrm{get} \ \delta(E)=  a_0(E) (\{ \infty \} - \{0 \}) + \sum_{1<m <N, m \mid N}\frac{N}{m} a_0(E[\frac{1}{m}]).
\end{equation*}
\section{Boudaries of the Eisenstein elements} The level $N$ is square-free. Hence for $m \mid N,$
$N$ with $1<m<N$, there exists $a(m), b(m)$ are two unique integers such that
$a(m) \frac{N}{m}+ b(m)  m  \equiv 1 \pmod{N}$ with $1 \leq a(m) \leq m - 1$ and $1 \leq  b(m) \leq \frac{N}{m} - 1.$

\begin{lemma}
\label{twist}
For all $k$ with $1 \leq k \leq \frac{N}{m} - 1,$ we can choose an integer $s(k) \in (\Z/\frac{N}{m}\Z)$ such that
$(km, -1) = (m, s(k)m -1)$
in $\sP^1(\Z/N\Z)$. The map $k \rightarrow s(k)$ is a bijection\\ 
$(\Z/ \frac{N}{m} \Z)^* \rightarrow
\phi ( \prod_{p \mid N} (\Z/p\Z) - \{b_m(p)\} )$, where $\phi : \prod_{p \mid N} (\Z/p\Z) \rightarrow
(\Z / \frac{N}{m} \Z)$ is the standard isomorphism and $b_m(p)$ are chosen such that 
$ a_m(p) p  + b_m(p) m  =  1 \ \forall p \mid \frac{N}{m}$.
\end{lemma}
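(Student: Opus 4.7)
The plan is to translate the projective equivalence $(km,-1)=(m,s(k)m-1)$ in $\sP^1(\Z/N\Z)$ into a system of congruences on a unit $u\in(\Z/N\Z)^*$, and then to solve that system by the Chinese Remainder Theorem, using the square-free hypothesis in the crucial form $\gcd(m,N/m)=1$.

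First I would unwind the equivalence: $(km,-1)\sim(m,s(k)m-1)$ in $\sP^1(\Z/N\Z)$ if and only if there exists $u\in(\Z/N\Z)^*$ with $um\equiv km\pmod N$ and $u(s(k)m-1)\equiv -1\pmod N$. The first relation forces $u\equiv k\pmod{N/m}$. The second rearranges to $u\cdot s(k)\cdot m\equiv u-1\pmod N$, and since the left hand side is divisible by $m$, this forces $u\equiv 1\pmod m$. Conversely, given any $u$ satisfying these two congruences, $s(k)$ is determined mod $N/m$ by $s(k)\equiv u^{-1}(u-1)/m\pmod{N/m}$.

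Next I would apply CRT: because $\gcd(m,N/m)=1$, the pair of congruences $u\equiv k\pmod{N/m}$, $u\equiv 1\pmod m$ has a unique solution $u\pmod N$, and this $u$ is a unit mod $N$ precisely when $\gcd(k,N/m)=1$, because $u$ is automatically a unit mod $m$. This both confirms that $s(k)$ is well-defined whenever $k\in(\Z/(N/m)\Z)^*$ and explains why the natural domain of the map is $(\Z/(N/m)\Z)^*$. Reducing the formula $s(k)\equiv u^{-1}(u-1)/m$ modulo a prime $p\mid N/m$ and using $u\equiv k\pmod p$ yields the clean expression $s(k)\equiv(k-1)(km)^{-1}\pmod p$; in particular $s(k)\equiv b_m(p)\pmod p$ would force $(k-1)\equiv k\pmod p$, which is impossible, so $s(k)$ lies in the image of $\phi\bigl(\prod_{p\mid N/m}(\Z/p\Z-\{b_m(p)\})\bigr)$.

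Finally, to get bijectivity I would invert: given $s\pmod{N/m}$ with $s\not\equiv b_m(p)\pmod p$ for every $p\mid N/m$, the condition $sm\not\equiv 1\pmod p$ lets me solve $k\equiv -(sm-1)^{-1}\pmod p$ for each such $p$, and CRT assembles these into a unique $k\in(\Z/(N/m)\Z)^*$ with $s(k)=s$. A cardinality check $|(\Z/(N/m)\Z)^*|=\prod_{p\mid N/m}(p-1)=|\phi(\prod_{p\mid N/m}(\Z/p\Z-\{b_m(p)\}))|$ confirms the count. The only real obstacle is notational bookkeeping between the moduli $N$, $m$, $N/m$, and the individual primes $p\mid N/m$; once the square-free hypothesis is used to separate $m$ from $N/m$ via CRT, every step is an elementary congruence manipulation.
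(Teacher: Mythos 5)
Your proof is correct and follows essentially the same route as the paper: both arguments reduce the projective identity $(km,-1)=(m,s(k)m-1)$ to producing a scaling unit determined by the two congruences $u\equiv 1\pmod m$ and $u\equiv k\pmod{N/m}$ via the Chinese Remainder Theorem (the paper phrases this with $x\equiv -1\pmod m$, $x\equiv -k'\pmod{N/m}$, which is the same unit up to sign and inversion), and both exclude the residues $b_m(p)$ by observing that $s(k)m-1$ must be a unit modulo each $p\mid\frac{N}{m}$ while $b_m(p)m-1\equiv 0\pmod p$. The only notable difference is that you establish surjectivity by explicitly inverting the map ($k\equiv -(sm-1)^{-1}\pmod p$) in addition to the cardinality count, whereas the paper relies on injectivity together with the equality of cardinalities; your version is, if anything, slightly more complete on that point.
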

\begin{proof}
	 For all $k$ with $1 \leq k \leq (\frac{N}{m})^*$, let $k'$ be the inverse of $k$ in $(\Z/\frac{N}{m}\Z)^*$.  
	 
	By  Chinese remainder theorem, we choose an unique $x$ such that $x \equiv -1 \pmod m$ and 
 $x \equiv -k' \pmod {\frac{N}{m}}.$  It is possible to find such an $x.$ Observe
 $x$ is coprime to both $m$ and $\frac{N}{m}.$ We write $x = s(k)m -1$ for an unique $s(k)$ with 
 $0 \leq s(k) \leq \frac{N}{m} -1.$ Since
$\Gamma_0(N)\backslash \SL_2 (\Z) \cong \sP^1(\Z/N\Z)$, we deduce that $(km, -1) = (xkm, -x) = (-m, -x) = (m, x) = (m, s(k)m - 1)$
in $\sP^1(\Z/N\Z)$.

 Consider the map $\psi : (\Z/\frac{N}{m}\Z)^* \rightarrow (\Z/\frac{N}{m}\Z)$ given by $k \rightarrow s(k)$.  This map is one-one since if $s(y) = s(h)$ then $y \equiv h \pmod {\frac{N}{m}}.$ 
We have $\psi ( (\Z/\frac{N}{m}\Z)^* ) \subseteq \phi ( \prod_{p \mid N} (\Z/p\Z) - \{b^{m}(p)\} )$, where
$b_m(p)$ are chosen such that $ a_m(p) p  + b_m(p) m  =  1 \forall p \mid \frac{N}{m}$. 
For suppose $x \in (\Z/\frac{N}{m}\Z) -  \phi ( \prod_{p \mid N} (\Z/p\Z) - \{b_m(p)\} )$, then 
there exists a prime $p$ which divides $\frac{N}{m}$ such that $x \equiv \phi ( b_m(p)).$
Suppose $x \in \mathrm{Image}$ of $\psi$. We have $x = s(k)$ for some $k.$ 
Hence $xm-1$ is a unit mod $\frac{N}{m}$. Thus $xm-1$ is a unit mod $p$. 
Now look at $xm-1$ (mod $p$) = $b_m(p)m-1$ = 0 ( mod $p$).  Which is a contradiction. 
Hence the map $\psi$ is onto.
Hence, the map
\\ 
$(\Z/ \frac{N}{m} \Z)^* \rightarrow
\phi ( \prod_{p \mid N} (\Z/p\Z) - \{b_m(p)\} )$, $k \rightarrow s(k)$ is a bijection.
\end{proof}
\begin{prop}
\label{boundary}
 $\mathrm{The} \ \mathrm{boundary} \ \mathrm{of} X =\sum_{g \in \sP^1(\Z/N\Z)} F(g) [g]^*
\in \HH_1(X_0(N)-R \cup I, \partial(X_0(N)),\Z) \ \mathrm{is}$
$$
 \delta(X)= \sum_{m \mid N, 1 <m < N} A_m(X)([\frac{1}{m}] - [0] )+ C(X)( [\infty]-[0]), 
\ \mathrm{with}$$
$$A_m(X)=\sum_{l=0}^{\frac{N}{m}-1} [F(\beta^m_{l}) - F(\beta^m_{l} S)] \  \mathrm{and} \  \ C(X)=[F(0,1)-F(1,0)].
$$
\end{prop}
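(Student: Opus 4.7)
The plan is to apply the boundary map $\delta$ term by term, using that $\delta([g]^\ast) = [g\infty]-[g0]$ for $g \in \SL_2(\Z)$ (as noted after the short exact sequence above). This will give
\[
\delta(X) = \sum_{g \in \sP^1(\Z/N\Z)} F(g)\,\bigl([g\infty]-[g0]\bigr),
\]
so the coefficient of each cusp $c \in \partial(X_0(N))$ equals $\sum_{g:\,g\infty\sim c} F(g) - \sum_{g:\,g0\sim c} F(g)$, and I will compute this separately for each cusp.

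Under the identification $\Gamma_0(N)\backslash \SL_2(\Z)\cong\sP^1(\Z/N\Z)$ via the bottom row, a coset $g$ with bottom row $(c_0,d_0)$ satisfies $g\infty = a/c_0$ and $g\cdot 0 = b/d_0$; by Lemma~\ref{cuspspq} (square-free $N$), the cusp class of each is determined by the $\gcd$ of the denominator with $N$. So for $1<m<N$, the condition $g\infty \sim 1/m$ becomes $\gcd(c_0, N)=m$, and by Lemma~\ref{explicit} the cosets with this property are exactly $M_m = \{\beta^m_l : 0 \le l \le N/m-1\}$. This yields $\sum_{l=0}^{N/m-1} F(\beta^m_l)$ for the first sum.

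For the second sum I will use right-multiplication by $S$, which sends the class $(c_0, d_0) \in \sP^1(\Z/N\Z)$ to $(d_0, -c_0)$ and (since $\gcd(-c_0, N)=\gcd(c_0,N)$) restricts to a bijection from $\{g:\gcd(c_0, N) = m\}$ onto $\{g:\gcd(d_0, N) = m\} = \{g: g\cdot 0\sim 1/m\}$. The substitution $h = gS$ then gives $\sum_{g: g\cdot 0\sim 1/m} F(g) = \sum_{l=0}^{N/m-1} F(\beta^m_l S)$, and subtracting produces $A_m(X)$ as the coefficient of $[1/m]$. The identical argument with $m=N$ isolates the unique cosets $(0,1)$ and $(1,0)$ and gives $C(X) = F((0,1)) - F((1,0))$ as the coefficient of $[\infty]$. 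The coefficient of $[0]$ is then forced by the degree-zero constraint on $\delta(X)$ (since each $[g\infty]-[g0]$ has total degree zero), and regrouping into multiples of $[1/m]-[0]$ and $[\infty]-[0]$ yields the stated formula. The only step requiring any care is verifying the $S$-action descends cleanly to $\Gamma_0(N)$-cosets, which is immediate because $-I \in \Gamma_0(N)$ identifies $(c_0, d_0)$ with $(-c_0, -d_0)$ in $\sP^1(\Z/N\Z)$.
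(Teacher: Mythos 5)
Your argument is correct and is essentially the direct computation the paper intends (its own proof is just a citation to the analogous Proposition 32 of the level-$pq$ paper): expand $\delta([g]^*)$ cusp by cusp, identify the cosets landing on $[\frac{1}{m}]$ via the $\gcd$ of the bottom-row entries with $N$ using Lemmas \ref{explicit} and \ref{cuspspq}, and use right multiplication by $S$ to convert the $g\cdot 0$ condition into the $g\cdot\infty$ condition, with the $[0]$-coefficient forced by degree zero. The one point worth flagging is orientation: you take $\delta([g]^*)=[g\cdot\infty]-[g\cdot 0]$ whereas the paper elsewhere writes $\delta([g]^*)=[g\cdot 0]-[g\cdot\infty]$, but your choice is the one consistent with the sign of $C(X)=F(0,1)-F(1,0)$ in the statement (the identity coset is the unique one with $g\cdot\infty$ at $\infty$), so nothing of substance is affected.
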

\begin{proof}
The proof follows along the same line as the proof of  [Proposition 32, \cite{MR3463038}]. 
\end{proof}
\begin{prop}
\label{boundaryeven}
 The boundary of any element $X =\sum_{g \in \sP^1(\Z/N\Z)} F(g) \xi^0(g) \in
 \HH_1(X_\Gamma-P_{-}, P_{+},\Z) \ \mathrm{is}$ $$\delta^0(X)=\sum_{m \mid N, 1 <m < N}  \tilde{A}_m(X)([\frac{1}{m}]-[0])+\tilde{C}(X)([\infty]-[0]), 
$$
$$\mathrm{where} \ \tilde{A}_m(X)=\sum_{l=0}^{\frac{N}{m}-1} [F(\tilde{\beta}^m_{l}) -[\sum_{l=1}^{{\frac{N}{m}-1}} F(\tilde{\alpha}_{lm})]- F( \beta_{a(m)}^{ \frac{N}{m}}),
\ \mathrm{and} \ \tilde{C}(X)=[F(0,1)-F(\tilde{\alpha}_{N})].$$ 
\end{prop}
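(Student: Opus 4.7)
The plan is to follow the strategy of the proof of Proposition \ref{boundary}, which is carried out in [Prop.\,32, \cite{MR3463038}], and transport it to the relative homology $\HH_1(X_\Gamma-P_{-},P_{+},\Z)$. First, by linearity of the boundary map $\delta^0$, one has
\[
\delta^0(X)=\sum_{g\in\sP^1(\Z/N\Z)} F(g)\,\delta^0(\xi^0(g))=\sum_g F(g)\bigl([g\cdot\infty]-[g\cdot 0]\bigr),
\]
where each $[g\cdot\infty]$ and $[g\cdot 0]$ is a cusp class of $X_\Gamma$ lying in $P_{+}$.

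Second, I would enumerate $g$ over the explicit coset representatives $\tilde{\varOmega}$ of Lemma \ref{coset}, split into the three families $\{\tilde{\beta}^m_l\}$, $\{\tilde{\alpha}_k\}$, $\{\tilde{\alpha}_N\}$, and compute $g\cdot 0$ and $g\cdot\infty$ as rational numbers, identifying their $\Gamma_0(N)$-equivalence classes. A direct calculation gives $\tilde{\beta}^m_l\cdot\infty=-\tfrac{1}{N+m}$, which is $\Gamma_0(N)$-equivalent to $-\tfrac{1}{m}$ and so projects to $[\tfrac{1}{m}]$; and $\tilde{\alpha}_N\cdot 0$, $\tilde{\alpha}_N\cdot\infty$ project to $[0]$ and $[\infty]$ respectively. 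For the family $\{\tilde{\alpha}_k\}$, the $\Gamma_0(N)$-class of $\tilde{\alpha}_k\cdot 0$ and $\tilde{\alpha}_k\cdot\infty$ depends on $\gcd(k,N)$, and Lemma \ref{twist} is exactly the combinatorial tool that pins down which of these land at which cusp $[\tfrac{1}{m}]$: the ones contributing to $[\tfrac{1}{m}]$ are precisely $\tilde{\alpha}_{lm}$ for $1\le l\le \tfrac{N}{m}-1$.

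Third, grouping contributions by target cusp produces, for each $1<m<N$ with $m\mid N$, the coefficient
\[
\tilde{A}_m(X)=\sum_{l=0}^{N/m-1}F(\tilde{\beta}^m_l)-\sum_{l=1}^{N/m-1}F(\tilde{\alpha}_{lm})-F(\beta^{N/m}_{a(m)}).
\]
The first sum comes from the $\tilde{\beta}^m_l$ endpoints, the second from the relevant $\tilde{\alpha}_k$ endpoints, and the last term is the single correction produced by comparing the pairings of $\varOmega$ and $\tilde{\varOmega}$ with the cusp set under the canonical bijection $\Gamma\backslash\Gamma(2)\iso\Gamma_0(N)\backslash\SL_2(\Z)$; concretely $\beta^{N/m}_{a(m)}$ is the non-tilde representative whose Manin image contributes the opposite-oriented boundary at $[\tfrac{1}{m}]$, which is why it appears with a minus sign. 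The coefficient $\tilde{C}(X)=F(0,1)-F(\tilde{\alpha}_N)$ of $[\infty]-[0]$ reads off similarly from the $\tilde{\alpha}_N$ representative and the identity coset.

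The main obstacle is the bookkeeping in the third step: carefully tracking which coset representatives share a cusp target once one passes from $\Gamma_0(N)$-classes to $\Gamma$-classes and back. The subtlety is the correction $-F(\beta^{N/m}_{a(m)})$, which hinges on the integers $a(m),b(m)$ determined by $a(m)\cdot\tfrac{N}{m}+b(m)\cdot m\equiv 1\pmod{N}$, and on the interplay (via Lemma \ref{twist} and the Chinese remainder theorem) between the cusps $\tfrac{1}{m}$ and $\tfrac{1}{N/m}$ that swap under passage from $\infty$ to $0$ inside each geodesic $[g]^0$.
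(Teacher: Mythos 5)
Your proposal takes essentially the same approach as the paper, whose entire proof of this proposition is the one-line remark that it is a ``straightforward calculation using the coset representatives of Lemma \ref{coset}'' --- i.e.\ exactly the enumeration of $\tilde{\varOmega}$, computation of $g\cdot 0$, $g\cdot\infty$, and grouping by cusp class that you describe. One small slip worth fixing: $\tilde{\alpha}_N\cdot 0=\frac{N-1}{N}$ is $\Gamma_0(N)$-equivalent to $\infty$ and $\tilde{\alpha}_N\cdot\infty=\frac{N}{N+1}$ to $0$ (you state the reverse); the correct identification is what yields the minus sign on $F(\tilde{\alpha}_N)$ in $\tilde{C}(X)$.
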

\begin{proof}
 This is a straightforward calculation using the coset representatives of $\Gamma \backslash \Gamma(2)$ [cf.\, Lemma \ref{coset}].
\end{proof}
\begin{prop}
\label{boundact}
For $E \in \tE_{N}$, the boundary of the alomost Eisenstein elements $\sE'_E$ in $\HH_1(X_0(N)-R \cup I, \partial(X_0(N)),\Z)$ corresponding 
to the Eisenstein series $E$ is $-\delta(E)$ [\S  \ref{Eisensteinseries}].  
\end{prop}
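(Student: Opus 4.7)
The plan is to follow the strategy of [Proposition 32, \cite{MR3463038}] and specialize to our square-free setting. The proof proceeds in three steps.

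\textbf{Step 1 (Explicit description of $\sE'_E$).} By Corollary \ref{rho-int} the classes $\{[g]_*\}_{g \in \Gamma_0(N)\backslash \SL_2(\Z)}$ form a $\Z$-basis of $\HH_1(Y_0(N), R\cup I, \Z)$, and by Proposition \ref{rho-value} the dual basis under the intersection pairing is $\{[g]^*\}$. Combined with the defining property $\sE'_E\circ c = \pi_E(c)$, this forces
\[
\sE'_E \;=\; \sum_{g \in \Gamma_0(N)\backslash \SL_2(\Z)} F_E(g)\,[g]^*, \qquad F_E(g) \,=\, \pi_E([g]_*) \,=\, \int_{g\rho}^{gi} E(z)\,dz,
\]
where the integral converges because $E(z)\,dz$ is of the first kind on $Y_0(N)$.

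\textbf{Step 2 (Apply the abstract boundary formula).} Feed this expression into Proposition \ref{boundary}. We obtain
\[
\delta(\sE'_E) \;=\; \sum_{m\mid N,\ 1<m<N} A_m([\tfrac{1}{m}]-[0]) \;+\; C([\infty]-[0]),
\]
with $A_m = \sum_{l=0}^{N/m-1}\bigl[F_E(\beta^m_l)-F_E(\beta^m_l S)\bigr]$ and $C = F_E(0,1)-F_E(1,0)$. Since $Si=i$ and $S\rho = \rho^* = -\overline{\rho}$, each summand in $A_m$ telescopes to
\[
F_E(\beta^m_l)-F_E(\beta^m_l S) \;=\; \int_{\beta^m_l\rho}^{\beta^m_l\rho^*} E(z)\,dz,
\]
the integral along a short arc on the image of the unit circle translated by $\beta^m_l$.

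\textbf{Step 3 (Residue computation at each cusp).} The coset representatives $\beta^m_l$ for $l=0,\dots,N/m-1$ all send $\infty$ to (representatives equivalent to) the cusp $\tfrac{1}{m}$, and correspond bijectively to the $N/m$ sheets of $\Gamma_0(N) \backslash \SL_2(\Z)$ above $\Gamma_{\infty}\backslash \SL_2(\Z)$ sitting over $\tfrac1m$. The arcs $\beta^m_l[\rho,\rho^*]$ therefore concatenate modulo $\Gamma_0(N)$ into a small oriented loop around $\tfrac{1}{m}$ in $Y_0(N)$. Applying the residue theorem to $E(z)\,dz$ gives
\[
A_m \;=\; -\,e_{\Gamma_0(N)}(\tfrac{1}{m})\,a_0\!\bigl(E[\tfrac{1}{m}]\bigr) \;=\; -\tfrac{N}{m}\,a_0\!\bigl(E[\tfrac{1}{m}]\bigr),
\]
which matches the $[\tfrac{1}{m}]-[0]$ coefficient of $-\delta(E)$ by the formula in Section \ref{Eisensteinseries}. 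The analogous but simpler computation at $\infty$ gives $C = -a_0(E)$. Assembling these yields $\delta(\sE'_E) = -\delta(E)$.

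\textbf{Main obstacle.} The delicate point is Step 3: carefully orienting the arcs $\beta^m_l[\rho,\rho^*]$ so that they form a positively oriented loop around $\tfrac{1}{m}$ in $X_0(N)$, correctly accounting for the ramification index $N/m$, and keeping track of the sign conventions so that the residue computation produces exactly $-\tfrac{N}{m}\,a_0(E[\tfrac{1}{m}])$ rather than a different multiple. In the prime-power case $N=pq$ treated in \cite{MR3463038}, this bookkeeping uses the explicit Dedekind-sum expression of Proposition \ref{value-pi}; for $N$ odd square-free the same type of argument applies since every divisor $m\mid N$ still satisfies $\gcd(m, N/m)=1$, which is exactly what makes the coset representatives $\beta^m_l$ well behaved at the cusp $\tfrac{1}{m}$.
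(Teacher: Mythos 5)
Your proposal is correct and follows essentially the same route as the paper: reduce via Proposition \ref{boundary} to computing $A_m$ and $C$, telescope $F_E(\beta^m_l)-F_E(\beta^m_l S)$ into the arcs from $\beta^m_l\rho$ to $\beta^m_l\rho^*$, and recognize their concatenation as a loop around the cusp $\tfrac{1}{m}$ whose period is $e_{\Gamma_0(N)}(\tfrac{1}{m})\,a_0(E[\tfrac{1}{m}])$. The only (cosmetic) difference is that the paper evaluates this period by exhibiting the loop as $\{z_0,\gamma z_0\}$ for an explicit parabolic $\gamma\in\Gamma_0(N)$ conjugate to $\left(\begin{smallmatrix}1 & N/m\\ 0 & 1\end{smallmatrix}\right)$ and quoting the period formula of \cite{MR800251}, whereas you invoke the residue theorem directly; these are the same computation.
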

\begin{proof}
For $E \in \tE_{N}$, let $\sE'_E = \sum_{g \in \sP^1 (\Z/N\Z)} G_E(g)[g]^*$ be the almost Eisenstein element. According to Proposition \ref{boundary}, we need to calculate $A_m(\sE'_E )$ and $C(\sE'_E ).$ For all $0 \leq l < (\frac{N}{m} - 1)$, $\beta^m_{l} T = \beta^m_{l+1}$ and 
$\beta_{l-1}^{\frac{N}{m}} T = \gamma \beta_0$ with 
$\gamma=\left(\begin{smallmatrix}
1+N & \frac{N}{m}\\
-\frac{N}{m}m^2  & 1-N\\
\end{smallmatrix}\right)$. We have an inclusion $\HH_1(Y_0(N), \Z) \rightarrow \HH_1(Y_0(N), R \cup I, \Z)$. Since 
$\{\rho^* , \gamma \rho^* \} =\{\beta_0\rho^* , \gamma \beta_0\rho^* \}=-\sum_{k=0}^{\frac{N}{m}-1} \{\beta_{l}^m \rho, \beta_{l}^m \rho^*\}$, 
we deduce that 
\[
\pi_E(\gamma)=\int_{z_0}^{\gamma z_0}E(z) dz=\sE'_E \circ \{z_0, \gamma z_0\}=-\sE'_E \circ (\sum_{k=0}^{\frac{N}{m}-1}  \{\beta_{l}^m \rho, \beta_{l}^m \rho^*\})
=-\sum_{k=0}^{\frac{N}{m}-1} \sE'_E \circ
 \{\beta_{l}^m \rho, \beta_{l}^m \rho^{*} \}. 
\]
Applying Cor.~\ref{rho-int}, we have
$\sum_{k=0}^{ \frac{N}{m} -1} \sE'_E \circ  \{\beta_{l}^m \rho, \beta_{l}^m \rho^{*} \}=
\sum^{\frac{N}{m}-1}_{k=0} [G_E(\beta_{l}^m)-G_E(\beta_{l}^m S)] 
=-A_m(\sE'_E).$ 
Hence, we prove that $A_m(\sE'_E )=-\pi_E(\gamma)$. 
We now calculate  $\pi_E(\gamma)$ and $\pi_E(\gamma_0 )$ using \cite{MR800251}. Recall,
$\frac{1}{m}$ is a cusp with $e_{\Gamma_0(N)}(\frac{1}{m})=\frac{N}{m}$. Consider 
the matrices $x=\left(\begin{smallmatrix}
1 & -\frac{N}{m}\\
-m  & 1+N\\
\end{smallmatrix}\right)$ and $y=\left(\begin{smallmatrix}
1 & -m\\
-\frac{N}{m}  & 1+N\\
\end{smallmatrix}\right)$ respectively. 
We have $x\left(\begin{smallmatrix}
1 & \frac{N}{m}\\
0  & 1\\
\end{smallmatrix}\right) x^{-1}= \gamma.$  Notice that $x(i \infty) = \Gamma_0(N) \frac{1}{m}.$ 
By [\cite{MR800251}, p. 524], we deduce that $\pi_E(\gamma) = e_{\Gamma_0(N)} ( \frac{m}{N}) a_0(E[\frac{1}{m}])$ and $\pi_{E_{N}}(\gamma_0)= e_{\Gamma_0(N)} (\frac{1}{m}) a_0(E[\frac{1}{m}])$. 
By Proposition~\ref{boundary}, the boundary of the almost Eisenstein element corresponding to an
Eisenstein series $E$ is
\[
\delta(\sE'_E)=\sum_{1<m<N, m \mid N} A(\sE'_E)[\frac{1}{m}]++C(\sE'_E)[\infty]-(A(\sE'_E)+C(\sE'_E))[0]
\]
with $A_m(\sE'_E)=\frac{N}{m}a_0(E[\frac{1}{m}])$ and $C(\sE'_E)=-[F(I)-F(S)]$. 
By Cor.~\ref{rho-int} again, we deduce that $F(I)-F(S)=
\int_{\rho}^{\rho^*}E(z)dz=-a_0(E).$ Hence \ref{Eisensteinseries}  $\delta(E) = \delta(\sE'_E).$ \end{proof}

Let $\beta$ and $h$ be the matrices $\left(\begin{smallmatrix}
1 & 2\\
0  & 1\\
\end{smallmatrix}\right)$ and $\left(\begin{smallmatrix}
1 & 1\\
0  & 2\\
\end{smallmatrix}\right)$
respectively. 
 The modular curve $X_0(N)$ has no 
 obvious morphism to $X(2).$ Hence, we consider the modular curve $X_{\Gamma}.$ There are two natural maps $\pi, \pi':\Gamma \backslash  \overline{\tH} \rightarrow \Gamma_0(N) \backslash \overline{\tH}$ 
be the maps $\pi(\Gamma z)=\Gamma_0(N)z$ and 
$\pi'(\Gamma z)=\Gamma_0(N)\frac{z+1}{2}$ respectively. 
For the modular curve $X_{\Gamma}$, we have a similar short exact sequence
\[
0  \rightarrow  \HH_1(X_{\Gamma}-P_{-} ,\Z) \rightarrow \HH_1(X_{\Gamma}-P_{-} , P_{+},\Z)  \xrightarrow{\delta^0} {\Z}^{P_{+}} \rightarrow  \Z \rightarrow 0.
\]
The boundary map $\delta^0$ takes a geodesic, joining the the point $r$ and $s$ of $P_{+}$ to the formal symbol $[r] - [s]$. Let 
$
 \pi_*:\HH_1(X_\Gamma-P_{-},P_{+},\Z) \rightarrow \HH_1(X_0(N)-R \cup I, \partial(X_0(N)),\Z)
$
be the isomorphism defined by $\pi_*(\xi_0(g))=[g]^*$ [\cite{MR1363498}, Cor.\,1]. It is easy to see that $\delta(\pi_*(X))=\delta^0(X)$ for all 
$X \in \HH_1(X_\Gamma-P_{-},P_{+},\Z).$
\begin{prop}
\label{bound}
For all $E \in \tE_{N}$, let $\sE_E^0 $ be the even Eisenstein element in $\HH_1(X_\Gamma-P_{-},P_{+},\Z)$ [\S ~\ref{Even}]. 
The boundary of the modular symbol $\pi_*(\sE^0_E)$ is $-6\delta(E)$.
\end{prop}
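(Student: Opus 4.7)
The plan is to combine the explicit form $\sE_E^0 = \sum_{g\in\sP^1(\Z/N\Z)} F_m(g)\,\xi^0(g)$ from Theorem~\ref{psi-values} with the boundary formula of Proposition~\ref{boundaryeven} to compute $\delta^0(\sE_E^0)$ directly, and then to match this with $-6\,\delta(E)$ as given by~\eqref{divisor}. Because $\delta\circ\pi_* = \delta^0$ (the compatibility noted just before the statement), this will yield $\delta(\pi_*(\sE_E^0))=-6\,\delta(E)$.

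Applying Proposition~\ref{boundaryeven} to $\sE_E^0$ reduces the problem to verifying
\[
\tilde{C}(\sE_E^0) \;=\; -6\,a_0(E), \qquad \tilde{A}_{m'}(\sE_E^0) \;=\; -6\,\tfrac{N}{m'}\,a_0\bigl(E[\tfrac{1}{m'}]\bigr) \quad\text{for each } m'\mid N,\ 1<m'<N.
\]
I would prove each identity by substituting the piecewise formula for $F_m$ from Proposition~\ref{exceptional} at the relevant coset representatives $\tilde{\beta}^{m'}_l$, $\tilde{\alpha}_{lm'}$ and $\beta^{N/m'}_{a(m')}$ (Lemma~\ref{coset}), rewriting each quantity $P_m(\gamma)$ in terms of the period homomorphism via the defining relation $12\,P_m(\gamma) = 2\pi_E(\gamma) - \pi_E(h\gamma h^{-1})$, and then invoking the local-at-cusp identity $\pi_E(\gamma) = e_{\Gamma_0(N)}(y)\,a_0(E[y])$ for parabolic elements $\gamma$ at a cusp $y$ (the very identity that drove the proof of Proposition~\ref{boundact}).

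The main obstacle is the arithmetic bookkeeping. The sums over $l$ of Dedekind sums must be shown to collapse under Dedekind reciprocity to the constant-term data $a_0(E[\tfrac{1}{m'}])$ at each cusp, and the sign conventions distinguishing the $(1+kx,1)$ and $(1,1+kx)$ cases of $F_m$ must be tracked carefully against the coset structure. The factor of $6$ separating this boundary from $\delta(\sE'_E)=-\delta(E)$ (Proposition~\ref{boundact}) is already visible in the explicit formula of Proposition~\ref{exceptional}, where $F_m$ carries a leading factor of $6$ or $12$ in each case; its geometric origin is the combination of $\lambda_{E,2}=(\lambda_E\circ\pi)^2/(\lambda_E\circ\pi')$ with the index $[\Gamma_0(N):\Gamma]=6$ (valid for odd $N$ since the reduction $\Gamma_0(N)\twoheadrightarrow\SL_2(\Z/2\Z)$ is surjective). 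Once these identities are set up, the collapse to the divisor $\delta(E)$ proceeds as in the $N=pq$ case of \cite{MR3463038}.
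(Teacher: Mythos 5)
Your reduction is the same as the paper's: write $\sE_E^0=\sum_g F_E(g)\xi^0(g)$ via Theorem~\ref{psi-values}, feed this into Proposition~\ref{boundaryeven}, and use $\delta\circ\pi_*=\delta^0$ to transfer the answer. The gap is in how you propose to evaluate $\tilde{A}_{m}(\sE_E^0)$ and $\tilde{C}(\sE_E^0)$. You plan to substitute the piecewise Dedekind-sum values of Proposition~\ref{exceptional} at each coset representative and then ``collapse under Dedekind reciprocity,'' invoking $\pi_E(\gamma)=e_{\Gamma_0(N)}(y)a_0(E[y])$ for parabolic $\gamma$. But the matrices $\gamma_1^{x,k}$, $\gamma_2^{x,k}$ entering $P_m$ are not parabolic stabilizers of the relevant cusps, and for the $\tilde{\beta}^m_l$-cosets the values are of the form $12(S(r,m)-2S(r,2m))$, so there is no single period to which the cusp identity applies termwise; the claimed collapse of $\sum_l F_E(\tilde{\beta}^m_l)$ into $\frac{N}{m}a_0(E[\frac{1}{m}])$ is exactly the hard content you are deferring, and ``reciprocity'' is not an argument for it. The paper avoids this entirely by a geometric telescoping: since $\tilde{\beta}^m_l\beta=\tilde{\beta}^m_{l+2}$ and $\tilde{\beta}^{N/m}_{l-2}\beta=\gamma'\tilde{\beta}_0$ with an explicit $\gamma'\in\Gamma$ fixing the cusp class $\frac{1}{m}$, the concatenation $\sum_l\{\tilde{\beta}^m_l(1),\tilde{\beta}^m_l(-1)\}$ equals $-\{z_0,\gamma'z_0\}$ in $\HH_1(X_\Gamma-P_+,P_-,\Z)$, so the entire sum is the single period $-(2\pi_E(\gamma')-\pi_E(h\gamma'h^{-1}))$, which \emph{is} evaluable from the constant term of $E$ at $\frac{1}{m}$ via \cite{MR800251}. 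That telescoping step is the idea your plan is missing.

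A second omission: Proposition~\ref{boundaryeven} makes $\tilde{A}_m$ depend also on the terms $\sum_l F_E(\tilde{\alpha}_{lm})$ coming from the $M_1$-type cosets. The paper disposes of these by proving $F_E(\tilde{\alpha}_{y\frac{N}{m}})=-F_E(\tilde{\beta}^m_{s(y)})$ (using the bijection of Lemma~\ref{twist} and the invariance of $k^*(\omega_E)$ under $\kappa'=\tilde{\alpha}_{y\frac{N}{m}}S(\tilde{\beta}^m_{s(y)})^{-1}\in\Gamma_0(N)$), which pairs them off against the $\tilde{\beta}$-terms. Your proposal does not address these terms at all, and without that cancellation the bookkeeping does not close. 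Your heuristic for the factor $6$ (the index $[\Gamma_0(N):\Gamma]=6$ visible in the prefactors of Proposition~\ref{exceptional}) is a reasonable sanity check but is not a substitute for the two computations above, which in the paper produce $\tilde{C}(\sE_E^0)=-6a_0(E)$ and the matching $\tilde{A}_m$ directly.
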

\begin{proof}
By Theorem \ref{psi-values}, suppose the even Eisenstein element $\sE_E^0$ in the relative homology
group\\$\HH_1(X_{\Gamma} - P_{-}, P_{+}, \Z)$ is
$\sE_E^0 =\sum_{g \in \sP^1(\Z/N\Z)} F_E(g)\xi_0 (g).$
According to Proposition \ref{boundaryeven}, we need to
calculate $\tilde{A}_m( \sE_E^0), \tilde{C}( \sE_E^0).$ 
For $0 \leq l_m < (\frac{N}{m} -2)$, we have 
$\tilde{\beta}_{l}^m \beta = \tilde{\beta}_{l+2}^m$ . A small check shows
that $\tilde{\beta}_{l-1}^m \beta = \tilde{\beta}_1$ and 
$\tilde{\beta}_{l-2}^{\frac{N}{m}} \beta = \gamma' \tilde{\beta}_0$ with
\[
\gamma'=\left(\begin{smallmatrix}
1+2 N(1+\frac{N}{m}) & 2\frac{N}{m} \\
-2\frac{N}{m}(m+N)^2 & 1-2 N(1+\frac{N}{m})\\
\end{smallmatrix}\right) \in \Gamma.
\]
In $\HH_1(X_{\Gamma} - P_{+}, P_{-} ,\Z)$, we have
\[
\{-1, \gamma'(-1)\} = \{\tilde{\beta}_0 (-1), \gamma' \tilde{\beta}_0(-1)\} = 
-\sum_{l_m=0}^{\frac{N}{m}-1}\{\tilde{\beta}_{l}^m(1), \tilde{\beta}_{l}^m (-1)\}
=\sum_{l_m=0}^{\frac{N}{m}-1}\{\tilde{\beta}_{l-1}^{\frac{N}{m}} (-1), 
\tilde{\beta}_{l}^m (1)\}.
\]
By the definition of the even Eisenstein elements, we conclude that
\[
\int_{z_0}^{\tilde{\beta} z_0} k^*(\omega_E )= \sE_E^0 \circ \{z_0, 
\gamma' z_0 \} = 
- \sE_E^0\circ (\sum^{\frac{N}{m}-1}_{{l}=0}
({\tilde{\beta}_{l}^m(1), \tilde{\beta}_{l}^m (-1)})
=-\sum^{\frac{N}{m}-1}_{{l}=0}\sE_E^0 \circ \{\tilde{\beta}_{l}^m(1),
 \tilde{\beta}_{l}^m (-1)\}.
\]
It is easy to see that $h AS B h^{-1} \in \SL_2(\Z)$ for all
 $A, B \in \Gamma(2)$.  Since $[\tilde{\alpha}_{y \frac{N}{m}} S] =
  [\tilde{\beta}_{s(y)}]$ in $\sP^1(\Z/N\Z)$, so
$\kappa'=\tilde{\alpha}_{y\frac{N}{m}} S(\tilde{\beta}_{s(y)}^m)^{-1} \in 
\Gamma_0(N)$ and $h \kappa'h^{-1}\in \Gamma_0(N)$. 
We deduce that the differential form
\[
k^*(\omega_E)=f(z)dz=[2 E(z)-\frac{1}{2} E(\frac{z+1}{2})] dz
\]
is invariant under $\kappa'$. Hence \begin{equation}
\label{valuered}
F_E(\tilde{\alpha}_{y\frac{N}{m}})=
\int_{\tilde{\alpha}_{y\frac{N}{m}}(1)}^{\tilde{\alpha}_{y\frac{N}{m}}(-1)}f(z)dz
=\int_{\tilde{\alpha}_{y\frac{N}{m}}S(-1)}^{\tilde{\alpha}_{kq}S(1)}f(z)dz=
-\int_{\tilde{\alpha}_{y\frac{N}{m}}S(1)}^{\tilde{\alpha}_{y\frac{N}{m}}S(-1)}
f(z)dz
\end{equation}
\[
=-\int_{\kappa'^{-1}\tilde{\alpha}_{y\frac{N}{m}}S(1)}^{\kappa'^{-1}\tilde{\alpha}_{y\frac{N}{m}}S(-1)}f(\kappa'z)d\kappa'z=-\int_{\gamma'_{s(y)}(1)}^{\gamma'_{s(y)}(-1)}f(z)dz=
-F_E(\tilde{\beta}_{s(y)}^m).
\]

By Theorem~\ref{int-iso}, we have
\[
 \sum_{k=0}^{\frac{N}{m}-1} F_E(\tilde{\beta}_{l}^m)
 = \sum_{l=0}^{\frac{N}{m}-1} \sE_E^0 \circ \{\tilde{\beta}_{l}^m(1), 
 \tilde{\beta}_{l}^m(-1)\}=-\int_{z_0}^{\tilde{\beta} z_0} k^*(\omega_E).
\]
By  the definition of the period $\pi_E$ of the Eisenstein series $E(z),$ we get
\[
 \int_{z_0}^{\gamma' z_0} k^*(\omega_E)=\int_{z_0}^{\gamma' z_0}
  [2 E(z)-\frac{1}{2} E(\frac{z+1}{2})] dz= 2 \pi_E(\gamma')-\pi_E(h\gamma'h^{-1}).
\]
As in the proof of Proposition 35, p.no 281,  \cite{MR3463038} (replacing p by m and q by $\frac{N}{m},$  we have $\pi_E(h \gamma'h^{-1})=\frac{N}{m}a_0(E[\frac{1}{m}])$ and $\pi_E(\gamma')
=2 \frac{N}{m} a_0(E[\frac{1}{m}])$ and
$ \int_{z_0}^{\gamma' z_0} k^*(\omega_E)=3 a_0(E[\frac{1}{m}])$.  
\[
F_E(I)=-F_E(\alpha_{N})=\int_{1}^{-1} [2 E(z)-\frac{1}{2} E(\frac{z+1}{2})] dz=-\int_{-1}^{\beta(-1)} [2 E(z)-\frac{1}{2} E(\frac{z+1}{2})] dz=-3 a_0(E), 
\]
we conclude that $\tilde{C}(\sE_E^0)=[F_E(I)-F_E(\alpha_{N})]=-6 a_0(E)$ and hence $\delta^0(\sE_E^0)=\delta(\sE_E^0)=-6 \delta(E)$. 
\end{proof}
The inclusion map $i : (X_0(N) - R \cup I, \partial(X_0(N)) \rightarrow (X_0(N), \partial(X_0 (N))$ induces an onto map $i_* :
\HH_1(X_0(N) - R \cup I, \partial(X_0(N), \Z) \rightarrow \HH_1(X_0(N), \partial(X_0(N)), \Z)$ with
 $i_{*}( [ g]^* ) = \xi (g).$ Note that 
$ \delta ( [g]^* ) = [g.0] - [ g.\infty ] = \delta^{'}( \xi(g) ) =  \delta^{'}( i_{*} ( [g]^* ) ).$ 
From [\S~\ref{relative homology}], we have $\delta(c) = \delta^{'}(i_{*}(c))$ for all homology class
$c \in  \HH_1 (X_0(N)-R \cup I, \partial(X_0(N), \Z)$. 
\section{Proof of Theorem ~\ref{Main-thm}}
\begin{proof}
By [\cite{MR1363498}, Cor.\,3], we obtain $i_*(\sE_E') \circ c = \sE_E' \circ i^* c = \int_c i_*(E(z)dz).$
Hence, $i_*(\sE'_E)$ is the Eisenstein element inside the space of modular symbols corresponding to $E$. 
By Proposition~\ref{boundact} and ~\ref{bound}, the boundary of $\pi_*(\sE^0_E)$ is same as the boundary of $6i_*(\sE'_E)$.
There is a non-degenerate bilinear pairing 
$ S_2(\Gamma_0(N)) \times \HH_1(X_0(N),\R) \rightarrow \C$
given by  $(f,c)=\int_c f(z) dz$. Hence, the integrals of the 
holomorphic differentials over $\HH_1(X_0(N),Z)$ are not always zero.
By  [\cite{MR1405312}, Lemma 5], the integrals of every holomorphic differentials over $i_*(\sE'_E)$ and $i_*(\pi_*(\sE_E^0))$ are always zero. 
\[
\mathrm{We} \ \mathrm{deduce} \ \mathrm{that} \
\sE_E=i_*(\sE'_E)=\frac{1}{6} i_*\pi_*(\sE_E^0)=\frac{1}{6} \sum_{g \in \sP^(\Z/N\Z)} F_E(g) \xi(g), \
\mathrm{for}  \ E \in \tE_{N}.
\]
Let $e_{N} \in  \HH_1 (X_0 (N), \Z) \otimes \R$ be the winding element.
Let ${1 < m < N, m \mid N}.$
The constant Fourier coefficients of $E_{N}$ at cusps $0$ and $\frac{1}{m},$ and $\infty$ are 
$\frac{1-N}{24N}$, $0$, $0$ and $\frac{N-1}{24}$ respectively [as in the proof of Lamma 38,   \cite{MR3463038} (replacing p by m and q by $\frac{N}{m}.$]
Hence, we obtain
 $$(1-N) e_{N}=\sum_{v \in(\Z/N\Z)^*} F_{N}((1,v)) \{0,\frac{1}{x}\}.$$
 \end{proof}
\section{Concluding Remarks}
Generalization of the results in this paper to any arbitrary level $N$ is an interesting question. The methods in this paper works only for squarefree level.
\begin{remark}
For the Eisenstein series $E_m$ $\in$ $E_2(\Gamma_0(m))$, $\frac{1}{m}$ represents the cusp $\infty$ and
$\frac{m}{N}$ represents the cusp $0.$ 
We deduce that $a_0(E_m[\beta_{0}])=\frac{m-1}{24}$ 
and $a_0(E_m[\gamma_{0}]) = \frac{1-m}{24m}.$
\end{remark}
\begin{remark}
For the Eisenstein series 
$E_{N},$ by [Lemma 4,~\cite{MR1405312}] the Eisenstein elements can be written explicitly if 
$g=(r-1,r+1)$ as follows.
\[
F_{N}((r-1,r+1))=\sum_{h=0}^{N-1} \overline{B_1} (\frac{hr}{2N}).
\]
\end{remark}

\section{Acknowledgements}
The author would like to thank Lo{\"{\i}}c Merel, Debargha Banerjee, Narasimha Kumar and Joseph Oesterl{\'e} for their helpful discussion, suggestions and comments. Part of this work was done at IMJ-PRG, France. The author would like to thank the mathematics department for the hospitality. The author would like to thank IISER, TVM for providing the excellent working conditions.
	

\bibliographystyle{unsrt}
\bibliography{Eisensteinquestion.bib}
\end{document}